\documentclass[11pt]{amsart}
\usepackage[numbers, square]{natbib}
\usepackage{amssymb}
\usepackage{amsmath}
\usepackage{amsfonts}
\usepackage{geometry}
\usepackage{amsthm}
\usepackage{hyperref}
\usepackage{wrapfig}

\usepackage[dvipsnames]{xcolor}

\providecommand{\U}[1]{\protect\rule{.1in}{.1in}}
\theoremstyle{plain}

\newtheorem{lemma}{Lemma}

\newtheorem{proposition}{Proposition}
\newtheorem{remark}{Remark}

\newtheorem{theorem}{Theorem}
\numberwithin{equation}{section}

 \DeclareMathOperator{\diam}{diam}

\DeclareMathOperator{\supp}{supp}

\newcommand{\pr}[1]{\left( #1 \right) }

\begin{document}
\title[ nodal sets ]
{Upper bounds of nodal sets for solutions of bi-Laplace equations: II}
\author{ Jiuyi Zhu}
\address{
Department of Mathematics\\
Louisiana State University\\
Baton Rouge, LA 70803, USA\\
Email:  zhu@math.lsu.edu }
\thanks{Zhu is supported in part by  NSF grant DMS-2154506 and DMS-2453348}
%\date{\today}
\subjclass[2010]{35J05, 35J30, 58J50, 35J05.} \keywords {Nodal sets, Carleman
estimates, bi-Laplace equations}
%\dedicatory{}

\begin{abstract}
 We investigate  the upper bounds of nodal sets for solutions of bi-Laplace equations without using frequency functions which play an essential role in the study of nodal sets in the celebrated work by Logunov \cite{Lo18}.
 We obtain some delicate monotonicity and propagation of smallness results by Carleman estimates.  A polynomial upper bound for the nodal sets of solutions is obtained.
\end{abstract}

\maketitle
\section{Introduction}

In this paper, we study the upper bounds of nodal sets for the solutions of bi-Laplace equations
\begin{equation}
\triangle^2_g u=W(x)u \quad \mbox{on} \ \mathcal{M},
\label{bi-Laplace-1}
\end{equation}
where $\mathcal{M}$ is a compact and smooth Riemannian manifold with dimensions $n\geq 2$.
Assume that $\|W\|_{L^\infty}\leq M$ for some large constant $M$. The nodal sets are the zero level sets of solutions. For the eigenfunctions of Laplace equations
\begin{equation}
\triangle_g \phi_\lambda+\lambda \phi_\lambda=0\quad \mbox{on} \ \mathcal{M},
\end{equation}
 a renowned conjecture asked by Yau \cite{Yau82} is that if the Hausdorff measure of nodal sets satisfies
$$ c\sqrt{\lambda}\leq H^{n-1}(x\in\mathcal{M}|\phi_\lambda=0)\leq C\sqrt{\lambda},   $$
where $c$, $C$ depend on the smooth manifold $\mathcal{M}$. The conjecture was answered in the seminal paper by Donnelly-Fefferman \cite{DF88} in real analytic manifolds.  Lin \cite{Lin91} provided a simpler proof for the upper bound for general second order elliptic equations on the analytic manifolds by studying frequency functions. We focus on the discussion of upper bounds of nodal sets 
on the smooth manifolds.  An exponential upper bound
$H^{n-1}(\{\phi_\lambda=0\})\leq C \lambda^{C\sqrt{\lambda}}$ was obtained by Hardt and Simon \cite{HS89}. Recently, an important improvement with a polynomial upper bound was shown by Logunov in \cite{Lo18} 
$$H^{n-1}(x\in\mathcal{M}| \phi_\lambda=0)\leq C \lambda^{\beta},$$
 where $\beta>\frac{1}{2}$ depends only on the dimension. A new combinatorial argument was developed in \cite{Lo18}, \cite{LM18} to investigate the doubling index of Laplace eigenfuncions. 
%An essential tool of studying doubling index is to use the monotonicity property of frequency functions. 
For the upper bounds of nodal sets  on smooth surfaces, Donnelly-Fefferman \cite{DF90} showed that  $H^{1}(\{x\in\mathcal{M}|\phi_\lambda=0\})\leq C \lambda^{\frac{3}{4}} $, See also a different proof given by Dong \cite{D92}.
%by using Carleman estimates and Calder\'on and Zygmund type decomposition. 
%A different proof based on frequency functions was given by Dong \cite{D92}. 
 Logunov-
Malinnikova \cite{LM18} were able to refine the upper bound to be $C \lambda^{\frac{3}{4}-\epsilon}$ for some small $\epsilon$ using the new combinatorial argument. 

%For the lower bounds of nodal sets of eigenfunctions,
% Logunov \cite{Lo18-1} answered Yau's conjecture and obtained the sharp lower bound of Laplace eigenfunctions on smooth manifolds. See also  e.g. \cite{B78}, \cite{S14}, \cite{SZ11}, \cite{CZ11}, \cite{M11}, \cite{LPS24} for related results on the lower bounds.

Some delicate monotonicity results of doubling index was shown using the almost monotonicity properties of frequency functions, which enables the combinatorial arguments in the breakthrough work in \cite{Lo18}.
 The main goal of this paper is to replace the tool of frequency functions by Carleman estimates in the study of upper bounds of nodal sets. For harmonic functions $\triangle u=0$, the frequency functions was defined as
 \begin{align}
     N(x, r)=\frac{r \int_{ B_r(x)} |\nabla u|^2}{\int_{\partial B_r(x)} u^2}.
     \label{fre-1}
 \end{align}
The frequency function (\ref{fre-1}), which is rescaling invariant for harmonic function, has deep geometric perspectives and describes the degree of leading polynomials if the function is locally expanded as the polynomial functions, See e.g.  \cite{GL86}, \cite{Lin91}, \cite{zhu16}. Due to its rescaling invariant and variational structure, the tool of frequency functions seems to be limited to some special types of partial differential equations. The desirable frequency functions for higher order elliptic equations or even second order elliptic equations with singular weight potentials seem not be available.
 Carleman estimates developed by \cite{C39} for  unique continuation property are weighted integral estimates. See e.g.  \cite{AKS62}, \cite{H83}, \cite{K07},  for the application of Carleman estimates for various partial differential equations. Compared with frequency functions, Carleman estimates are more flexible  and can be established for more general partial differential equations. Carleman estimates are also widely  used in inverse problems, control theorems, spectral theory, etc.

 %Logunov further studied the frequency function of harmonic functions and
%For the smooth manifolds, some progresses were made towards the upper bound of nodal sets.
% For higher dimensions $n\geq 3$,  Very recently,  

%For the lower bound, Logunov \cite{Lo1} answered Yau's conjecture and obtained the sharp lower bound for smooth manifolds. This breakthrough improved a polynomial lower bound obtained early by Colding and  Minicozzi \cite{CM}, Sogge and Zelditch \cite{SZ}. See also the same polynomial lower bound by different methods, e.g. \cite{HSo}, \cite{M}, \cite{S}.

%The upper bound of nodal sets was studied for general second order elliptic equations  in \cite{Lin}, \cite{HS}, \cite{HL1}, \cite{GR}, etc.  

Due to the lack of desirable rescaling invariant types of frequency functions for a wide range of elliptic equations, we will explore the tool of Carleman estimates in a refined way to study the upper bounds of nodal sets with the combinatorial arguments for doubling index. 
As a prototype, we aim to study the nodal sets for  (\ref{bi-Laplace-1}), which can be considered as the continuation of the early work  \cite{Zhu19} by the author.
%Let us briefly review the study of nodal sets for higher order elliptic equations. 
The Hausdorff dimension of nodal sets $\{x\in\mathcal{M}|u(x)=0\}$ of the solutions of higher order elliptic equations was shown be  not greater than $n-1$ in \cite{H20}.
The optimal upper bound of nodal sets for higher order elliptic equations was obtained by Lin and the author \cite{LZ22} in real analytic domains.
%see also  \cite{Ku95} for the study of some types of higher order elliptic equations. 
Complex analysis techniques were used in the  real analytic setting. An implicit finite upper bound of nodal sets for solutions in (\ref{bi-Laplace-1}) was obtained in \cite{Zhu19}. 
%A polynomial upper bound of  mixed nodal sets $\{x\in\mathcal{M}|u(x)=\triangle u(x)=0\}$ was shown in \cite{Zhu19}.
%studied by Han \cite{Han00}. It was shown in \cite{Han00} that thand the mixed nodal sets $\{u=\triangle u=0\}$ is not greater than $n-1$, and the Hausdorff dimension of the singular sets $\{D^\nu u=0 \ \mbox{for all} \ |\nu|<4\}$ is not greater than $n-2$. Especially, the Hausdorff measure of singular sets was studied by Han, Hardt and Lin in \cite{HHL03}. An implicit upper bound for the measure of singular sets in term of the doubling index was given.
 For the bi-Laplace equations on smooth manifolds,  we are able to show the following polynomial upper bounds for nodal sets of solutions in (\ref{bi-Laplace-1}).
\begin{theorem}
Let $u$ be the solutions of bi-Laplace equations (\ref{bi-Laplace-1}). There exists a positive constant $C$ that depends only on the manifold $\mathcal{M}$ such that
$$ H^{n-1}(x\in\mathcal{M}|u(x)=0)\leq CM^{\beta}, $$
where $\beta>\frac{1}{4}$ depends only on the dimension $n$.
\label{th1}
\end{theorem}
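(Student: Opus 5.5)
Following \cite{Zhu19}, I reduce to a second order elliptic system and run Logunov's combinatorial scheme, with Carleman estimates replacing frequency functions. Set $v=-\triangle_g u$, so that $\triangle_g u+v=0$ and $\triangle_g v+W u=0$. Rescaling $x\mapsto x/M^{1/4}$ gives a system whose zeroth order couplings are of size $O(1)$ on balls of radius comparable to $1$. Add a dummy variable $t$, as in the lifting tricks of Lin and Logunov, to obtain a system in $n+1$ variables that is a small perturbation of the harmonic system. The doubling index
$$N(B)=\log_2\frac{\sup_{2B}(|u|+|v|)}{\sup_{B}(|u|+|v|)}$$
then satisfies $N\le CM^{1/4}$ on unit scale balls of the rescaled picture. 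This follows from a global Carleman estimate for $\triangle_g^2$ with weight $e^{\tau\phi}$, taking $\tau\sim M^{1/4}$ so that $\tau^4$ absorbs $\|W\|_\infty\le M$, which is the standard doubling argument from \cite{Zhu19}.

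The core step is to establish, via Carleman estimates alone, the two ingredients that Logunov extracts from frequency monotonicity.

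\textbf{(a) Almost monotonicity.} If $B_r(x)\subset B_R(y)$, then $N(x,r)\le C\,N(y,R)+C$, and in the refined form $N(x,\rho r)\le (1+\epsilon)N(x,r)+C_\epsilon$ for scales bounded away from $0$ in the rescaled picture. To get this, I would use a Carleman estimate with radial weight $\phi=-\log r+r^{\delta}$ (or $e^{-\log r}$-type convexified weights), applied to the system with $\tau$ tied to the doubling index. This yields three-ball inequalities whose exponent tends to the harmonic one as the convexity perturbation vanishes, and hence a quasi-monotonicity $N(x,r_1)\le(1+\epsilon)N(x,r_2)+C$.

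\textbf{(b) Propagation of smallness.} Here I need the "simplex lemma" and the "hyperplane lemma": if a cube $Q$ is partitioned into $A^n$ subcubes each with doubling index $\ge N$, and $N$ is large, then $N(Q)\ge 2N$ (after deleting few cubes). I would prove these from quantitative Cauchy uniqueness (Carleman estimates near a hyperplane with weight depending on the distance to the hyperplane), giving $\sup_{\frac12 Q}\le \sup_{\text{hyperplane part}}^{\alpha}\sup_{2Q}^{1-\alpha}$.

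With (a) and (b), I would reproduce Logunov's combinatorial lemma: for a cube $Q$ with index $N$ split into $B^n$ subcubes, at most $\frac12 B^{n-1-c}$ subcubes have index $>N/(1+c)$. Combined with the local bound that a cube with small index $N$ carries nodal volume $\le C N^{\alpha}\,\mathrm{side}^{n-1}$, this gives by induction on $N$ that $H^{n-1}(\{u=0\}\cap Q)\le C N^{\beta}\,\mathrm{side}(Q)^{n-1}$. That local bound is obtained from an integral-geometric estimate via one-variable zero counting (Jensen-type counting for traces on lines using the Carleman doubling, as in \cite{Zhu19}). I then cover $\mathcal M$ by $M^{n/4}$ balls of radius $M^{-1/4}$, each with index $\le CM^{1/4}$. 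Rescaled, each contributes $C(M^{1/4})^{\beta'}M^{-(n-1)/4}$, for a total of $CM^{1/4+\beta'/4}$, that is, $\beta>1/4$.

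The main obstacle is (a) in its sharp $(1+\epsilon)$ form. A crude Carleman three-ball inequality only gives $N(x,r)\le C N(y,R)$ with $C>1$, which destroys the induction. I would try a weight whose convexity parameter is chosen depending on the scale ratio, and exploit that lower order terms are small after rescaling. The goal is to show that the loss in the Hölder exponent is $1+o(1)$ as the ratio of radii is fixed and $\epsilon\to0$.
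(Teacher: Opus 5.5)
Your overall plan matches the paper's: rescale by $M^{1/4}$; derive almost monotonicity from a Carleman estimate with weight $-\ln r+r^{\epsilon}$; use a boundary Carleman estimate for Cauchy propagation of smallness and hence the hyperplane lemma; then the simplex lemma, Logunov's counting lemma, the recursion $F(N)\le 2AF(N/(1+c))$, and a covering of $\mathcal{M}$ by $CM^{n/4}$ balls of radius $r_0M^{-1/4}$. The gap is that the step you flag as the main obstacle is the heart of the paper, and you leave it unresolved.

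\textbf{What actually fails.} The exponent is not the issue. With $\phi=-\ln r+r^{\epsilon}$, the H\"older exponent in the three-ball inequality is within $O(\delta)$ of $\ln 2/\ln t$ once $(tR)^{\epsilon}\le \delta/100$. What fails is uniformity in the absolute radius $R$. The standard estimate (\ref{carle-sec}) carries the weights $r^{\epsilon/2}$. Iterating it for $\triangle^2$ on an annulus of size $R$ therefore only gives $\tau^{3}R^{\epsilon}\|e^{\tau\phi}\eta u\|\lesssim \|r^{4}e^{\tau\phi}[\triangle^2,\eta]u\|$. This leaves two options, both bad:
\begin{itemize}
\item accept a three-ball constant of size $R^{-\epsilon}$, i.e.\ an additive error of order $\epsilon\log_2(1/R)$ in the doubling index; or
\item take $\tau\gtrsim R^{-\epsilon/3}$, which costs an additive error of order $R^{-\epsilon/3}$.
\end{itemize}
The recursion is applied to cubes of every size, so this breaks the induction just as a multiplicative constant $C>1$ would. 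Weakening the convexity to suit the scale ratio, as you suggest, only weakens the gain in (\ref{carle-sec}).

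\textbf{The missing idea} is the scale-invariant estimate (\ref{car-imp}) of Lemma \ref{lemma-go}, built in three pieces:
\begin{itemize}
\item On $B_{c/\tau}$, use the exact flat estimate $\tau\|r^{-\tau}f\|+\|r^{1-\tau}\nabla f\|\le C\|r^{2-\tau}\triangle f\|$. It holds when $|\tau-k|\ge\frac13$ for all $k\in\mathbb{N}$, by expanding in spherical harmonics. Combine it with (\ref{compare-ell}), which shows the variable coefficients are a small perturbation there.
\item On $r\ge c/\tau$, (\ref{carle-sec}) already suffices, because $\tau^{1/2}r^{\epsilon/2}\gtrsim \tau^{(1-\epsilon)/2}$.
\item Glue the two with $\eta(\tau x)$. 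Its commutator lives on $r\sim 1/\tau$ and is absorbed by (\ref{carle-sec}).
\end{itemize}
This gives $\tau\|e^{\tau\phi}f\|+\|e^{\tau\phi}r\nabla f\|\le C\|e^{\tau\phi}r^2\partial_i(a_{ij}\partial_j f)\|$ with no $r^{\epsilon/2}$ loss, and iterating gives (\ref{Carle2}). Running the three-ball argument with (\ref{Carle2}), $\tau\in\mathbb{N}+\frac13$ and $tR\le \bar R(\delta,g)$ yields Lemma \ref{baa} with constants independent of $R$.

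\textbf{Secondary corrections.}
\begin{itemize}
\item The bound $N\le CM^{1/4}$ is not what Carleman estimates deliver. Iterating a second-order $L^2$ Carleman estimate gains $\tau^{3}$, not $\tau^{4}$, so one gets $\tau\sim M^{1/3}$. The paper uses the doubling inequality (\ref{haha-my}) from \cite{Zhu19}, i.e.\ $N\le CM^{1/3}$. This only changes your exponent to $\beta=\alpha_0/3+1/4$, which is harmless for the statement.
\item There is no dummy-variable lifting for $\triangle^2u=W(x)u$ with non-constant $W$, and none is needed. The paper works with the scalar equation and the doubling index of $u$ alone.
\item An index built from $|u|+|v|$ with $v=-\triangle u$ is not homogeneous under $x\mapsto Rx$; at scale $R$ the natural combination is $|u|+R^2|v|$. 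That is at odds with scale-uniform monotonicity. Meanwhile, the nodal-volume induction and the hyperplane lemma (smallness of $\partial^j_{x_n}u$, $j\le 3$, on the hyperplane) are driven by the index of $u$.
\item The base case of the recursion cannot come from Jensen-type zero counting on lines. Solutions are not analytic: the metric is only smooth and $W\in L^\infty$. The paper takes $F(N_0)<\infty$ from the implicit bound in Theorem 4 of \cite{Zhu19}.
\item The simplex lemma follows from almost monotonicity and Euclidean geometry alone. Only the hyperplane lemma (Lemma \ref{lemdou}) uses Cauchy propagation of smallness, namely Lemma \ref{propa-1}, which in turn needs the boundary Caccioppoli inequality of Lemma \ref{corollary Cac}.
\end{itemize}
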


%The proof of Theorem \ref{th1} follows from the study of  the combinatorial arguments for doubling index in the celebrated paper by Logunov \cite{Lo18}. To carry out the  combinatorial arguments, the almost monotonicity of doubling index and propagation of smallness are necessary, which have been obtained by the frequency functions approach. Due to the lack of rescaling invariant type of frequency functions for bi-Laplace equation, we develop some new types of resecaling invariant Carleman estimates (i.e. Lemma  \ref{lemma-go})  to obtain these results.

The organization of the paper is as follows. 
In Section 2, we  develop a new type rescaling invariant Carleman estimate and obtain the almost monotonicity results for doubling index for solutions of bi-Laplace equations.
 Section 3 is devoted to the proof of the polynomial upper bound of nodal sets for solutions of bi-Laplace equations.
 The Appendix includes the proof of Caccioppoli type inequality with boundary terms for bi-Laplace equations and a simplex Lemma.The letters $C$, ${c}$,  ${c}_i$, $C_i$ denote positive constants that do not depend on $\lambda$ or $u$, and may vary from line to line. 

\section{monotonicity of doubling index}
%In all the aforementioned literature for the study of the upper bound of nodal sets of classical eigenfunctions, a crucial estimate is the following sharp quantitative doubling inequality established by Donnelly and Fefferman \cite{DF88},
%\begin{equation}
%\|\phi_\lambda\|_{\mathbb B_{2r}(x)}\leq e^{ C\sqrt{\lambda}}\|\phi_\lambda\|_{\mathbb B_{r}(x)}
%\label{doubleuse}
%\end{equation}
%for any $r>0$ and any $x\in \mathcal{M}$, where $\|\cdot\|_{\mathbb B_r(x_0)}$ denotes the $L^2$ norm on the ball $\mathbb B_r(x_0)$. Such optimal doubling inequalities provide the sharp upper bound for the frequency function and vanishing order for classical eigenfunctions. Roughly speaking, doubling inequalities retrieve global feature from local data. Those estimates are also widely used in inverse problems, control theorems, spectral theory, etc

%nodal sets for the solutions of bi-Laplace equations
%\begin{equation}
%\triangle^2 u=W(x)u \quad \mbox{in} \ \mathcal{M}
%\label{bi-Laplace-m}
%\end{equation}
%on a $n$-th dimensional smooth manifold without boundary.

For any $x_0\in \mathcal{M}$, let $r=d(x, \ x_0)=r(x)$ be the Riemannian distance from $x_0$ to $x$. $ B_r(x_0)$ is denoted as the
geodesic ball at $x_0$ with radius $r$. 
 We treat the Laplace-Beltrami operator on the manifold as an elliptic operator $\triangle $ in a domain in $\mathbb R^n$ most time. For the Euclidean distance $d(x,y)$ and Riemannian distance $d_g(x, y)$, there exists a small number $\epsilon>0$ such that
$$1-\epsilon\leq \frac{d_g(x, y)}{d(x,y)}\leq 1+\epsilon   $$
for $x, y\in  B_{r_0}$ with $r_0$ depending on $\epsilon$ and the manifold.
The symbol $\|\cdot\|$ denotes the $L^2$ norm. Specifically, $\|\cdot\|_{ B_r(x_0)}$ or $\|\cdot\|_r$ for short denotes the $L^2$ norm on the ball $ B_r(x_0)$.
%Our crucial tools to get the doubling inequality are the quantitative
% Carleman estimates.
 Carleman estimates are weighted integral inequalities with a weight function $e^{\tau\phi}$, where $\phi$ usually satisfies some convex condition. We construct the weight  function $\phi$ as follows.
 Set $$\phi=-g(\ln r(x)),$$ where $g(t)=t-e^{\epsilon t}$ for some small $0<\epsilon<1$ and $-\infty<t <T_0$. The positive constant $\epsilon$ is a fixed small number and $T_0$ is negative with $|T_0|$ large enough. One can check that
 \begin{equation}
 \lim_{t \to -\infty}-e^{-t} g''(t)=\infty \quad \mbox{and} \quad \lim_{t \to -\infty} g'(t)=1.
 \end{equation}
 %Such weight function $\phi$ was introduced by H\"ormander in \cite{H}.

We consider $P(x, \partial)=\frac{\partial}{\partial x_i}(a_{ij}(x) \partial_j )$, where $P(x, \partial)$ is a uniformly elliptic operator, $a_{ij}$ is Lipschitz continuous, and the summation is understood in the index $i, j$ by Einstein notation.
By a rescaling and argument, we may assume
that $a_{ij}(0)=\delta_{ij}$.  Thus, it holds that $|a_{ij}(x)-\delta_{ij}|\leq C|x|$.
The Carleman estimates (\ref{car-imp}) obtained in Lemma \ref{lemma-go}
are rescaling invariant, compared with the well-known Carleman estimates (\ref{carle-sec}), since there is no  extra  term $r^{\epsilon/2}$ in the norms in the right hand side of (\ref{car-imp}). The rescaling invariant structure is essential to obtain the almost monotonicity for doubling index in Lemma \ref{baa}.

\begin{lemma}
\label{lemma-go}
There exist positive constants $R_0$, $C$, which depend only on the $a_{ij}$ and $\epsilon$, such that, for any $f\in C^\infty_0(B_{R_0}\backslash \{0\})$ with $B_{R_0}\subset \mathbb R^n$, and any
large constant $\tau$ with $|\tau-k|\geq \frac{1}{3}$ for any $k\in \mathbb{N}$, then
\begin{align}
C\|e^{\tau \phi} r^2 \partial_i ( a_{ij} \partial_j  f)\| \geq 
\tau \|e^{\tau \phi} f\| +  \|e^{\tau \phi} r \nabla f\|.
\label{car-imp}
\end{align}
\end{lemma}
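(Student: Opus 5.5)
Our plan is to work in geodesic polar coordinates $x=r\omega$, with $t=\ln r$, $t\in(-\infty,\ln R_0)$. First we treat the flat operator $\triangle$ and then absorb the perturbation $a_{ij}-\delta_{ij}=O(r)$. Since $dx=e^{nt}\,dt\,d\omega$, we set $f=e^{-\frac{n-2}{2}t}v$. Then
\[
r^2\triangle f=e^{-\frac{n-2}{2}t}\Big(\partial_t^2 v+\triangle_\omega v-\big(\tfrac{n-2}{2}\big)^2 v\Big).
\]
Next we conjugate with the weight: $w=e^{\tau\phi}f$, where $\phi=-g(t)$. All weighted $L^2$ norms become $L^2(dt\,d\omega)$ norms of $w$ up to a common factor $e^{t}$ (from $e^{nt}$ combined with $e^{-(n-2)t}$). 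This factor is removed by one further substitution $w=e^{-t}\tilde w$, which only shifts $\tau$-independent constants. The estimate reduces to a lower bound for the conjugated operator
\[
L_\tau \tilde w=\partial_t^2\tilde w+2\tau g'\partial_t\tilde w+\big(\tau^2 g'^2+\tau g''\big)\tilde w+\triangle_\omega\tilde w+c_1\partial_t\tilde w+c_2\tau g'\tilde w+c_3\tilde w,
\]
with $c_i$ depending only on $n$.

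The key step is to expand $\tilde w$ in spherical harmonics, $\tilde w=\sum_k \tilde w_k(t)Y_k(\omega)$, with $\triangle_\omega Y_k=-k(k+n-2)Y_k$. For each mode we get an ODE in $t$ whose principal symbol (freezing $g'\approx1$) factors as
\[
\big(\partial_t+\tau g'-\mu_k^+\big)\big(\partial_t+\tau g'+\mu_k^-\big),
\]
where $\mu_k^\pm$ are roots that are affine in $k$. After the normalization above, the roots are essentially $\pm k$ shifted by constants. The lower bound for each first-order factor is standard, via Plancherel in $t$ or via integration by parts with $g'>0$ slowly varying, and it is proportional to $|\tau g'-\mu_k|$. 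The hypothesis $|\tau-k|\ge\frac13$ guarantees $|\tau-\mu_k|\gtrsim 1$, while the other factor is $\gtrsim \tau+k$. The product of the two factors therefore gives
\[
\|L_\tau\tilde w_k\|\gtrsim(\tau+k)\|\tilde w_k\|+\|\partial_t\tilde w_k\|,
\]
and also control of $k\|\tilde w_k\|$. Summing over $k$ recovers $\tau\|e^{\tau\phi}f\|+\|e^{\tau\phi}r\nabla f\|$, because $r\nabla f$ corresponds to $(\partial_t,\nabla_\omega)$. The convexity term $\tau g''$, which is negative with size $\tau\epsilon^2 e^{\epsilon t}$, is a lower-order error. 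We handle it with a commutator argument: writing $L_\tau$ as symmetric plus antisymmetric parts, the cross term produces $-\tau\int g''|\partial_t\tilde w|^2$-type quantities of favorable sign, in the spirit of the classical estimate (\ref{carle-sec}). Since $|g''|\le \epsilon^2 r^\epsilon$ is small for $R_0$ small, it can at worst be absorbed.

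Finally, for variable coefficients we write $\partial_i(a_{ij}\partial_j f)=\triangle f+\partial_i((a_{ij}-\delta_{ij})\partial_j f)$. Then
\[
\|e^{\tau\phi}r^2\partial_i((a_{ij}-\delta_{ij})\partial_j f)\|\le C R_0\big(\|e^{\tau\phi}r\nabla f\|+\tau\|e^{\tau\phi}r\nabla f\|\cdot r\big)+\dots
\]
The derivative falling on $e^{\tau\phi}$ when integrating by parts costs $\tau/r$, which is compensated by the gain of one factor $r$ from $|a_{ij}-\delta_{ij}|\le Cr$. To handle this cleanly, we rewrite the perturbation in divergence form and use the dual (negative-derivative) version of the same mode-by-mode estimate, which gives control $\gtrsim \|\cdot\|$ of $r\nabla$ applied to a first-order source. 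The resulting error is $CR_0(\tau\|e^{\tau\phi}f\|+\|e^{\tau\phi}r\nabla f\|)$, which is absorbed for $R_0$ small.

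We expect the main obstacle to be exactly this rescaling-invariant absorption. The constant $C$ must not degrade near $\tau\approx k$, where only the $\frac13$ gap helps. Obtaining the full gradient term without the loss $r^{\epsilon/2}$ requires the spectral (mode-wise) argument rather than the usual convexity argument. That argument is exact only for $g'\equiv1$, so the variation $g'-1=-\epsilon e^{\epsilon t}$ has to be treated perturbatively. The shift it induces in the root $\tau g'$ relative to $k$ is $\tau\epsilon r^\epsilon$, which is not obviously smaller than $\frac13$. We would first try to exploit the sign of $g''$ to show that this drift only helps, since $\tau g'$ moves monotonically through the integers. Failing that, we would localize in dyadic $t$-intervals where $\tau g'$ stays $\frac13$-away from integers.
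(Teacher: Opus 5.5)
\textbf{The gap is in your last step}, the passage from $\triangle$ to $\partial_i(a_{ij}\partial_j\,\cdot)$ by absorbing the perturbation on all of $B_{R_0}$.

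Your bookkeeping, ``the derivative on $e^{\tau\phi}$ costs $\tau/r$, compensated by the factor $r$ from $|a_{ij}-\delta_{ij}|\le Cr$'', leaves a net factor $\tau$. Relative to the scale-invariant quantities in (\ref{car-imp}), the error therefore has size $\tau r$, not $r$.

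Concretely, expanding the perturbation you must bound $\|e^{\tau\phi}r^{3}\nabla^2 f\|$. The flat mode estimate controls $\|e^{\tau\phi}r^{2}\nabla^2 f\|$ only by $C\tau\|e^{\tau\phi}r^2\triangle f\|$. The loss comes from modes with $\mu_k$ within $O(1)$ of $\tau g'$ and bounded $t$-frequency: there the factor $\partial_t+\tau g'-\mu_k$ is merely $\gtrsim 1$, while $r\nabla$ has size $\approx\tau$. The divergence-form/dual version suffers the same loss, because the conjugated $r\,\mathrm{div}$ also has size $\approx\tau$ on those modes.

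What you actually obtain is an error $\approx C\tau\|e^{\tau\phi}r^2\nabla f\|\le C\tau R_0\|e^{\tau\phi}r\nabla f\|$, which cannot be absorbed once $\tau\gg R_0^{-1}$. Keeping the convexity gain does not fix this. That gain bounds the first factor below by $(\tau|g''|)^{1/2}\approx\tau^{1/2}r^{\epsilon/2}$ near its zero, which only improves the relative error to $\approx\tau^{1/2}r^{1-\epsilon/2}$, still unbounded at fixed $r$ as $\tau\to\infty$. So perturbing off the flat operator can close only on a ball shrinking with $\tau$. For $r\gg 1/\tau$ the estimate has to come from the pseudoconvexity of $\phi$ with respect to the variable-coefficient operator itself.

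\textbf{This is precisely how the paper proceeds.}
\begin{itemize}
\item It perturbs off $\triangle$ only on $B_{2c/\tau}$, where $|a_{ij}-\delta_{ij}|\le 2Cc/\tau$ cancels the $\tau$-loss; this is (\ref{compare-ell}).
\item On $\{r\ge c/\tau\}$ it applies H\"ormander's estimate (\ref{carle-sec}) for the variable-coefficient operator. Its extra factor $\tau^{1/2}r^{\epsilon/2}\ge c'\tau^{(1-\epsilon)/2}$ makes it stronger than (\ref{car-imp}) there; see (\ref{cut-ell}).
\item It glues the two pieces with $\eta(\tau x)$. The commutator is supported where $r\approx 1/\tau$ and is again absorbed by (\ref{carle-sec}).
\end{itemize}

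With this splitting, your mode analysis is needed only for $\triangle$, and there it is sound once you make your first option for the drift explicit. The factorization $L_k=(\partial_t+a_1)(\partial_t+a_2)$ with $a_1=\tau g'-\mu_k$ is exact: the two factors differ by a constant, so no freezing is needed. Since $a_1'=\tau g''<0$, integration by parts gives $\|(\partial_t+a_1)v\|^2=\|\partial_t v\|^2+\int(a_1^2+\tau|g''|)|v|^2$. Write $a_1=A-y$ with $A=\tau-\mu_k$ and $y=\tau\epsilon e^{\epsilon t}$. Then $\tau|g''|=\epsilon y$, and $(A-y)^2+\epsilon y\ge\epsilon/12$ for all $y>0$ whenever $|A|\ge\frac13$. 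So the drift only helps.

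This gives (\ref{lap-car-1}) directly with the full weight $e^{\tau\phi}$. The paper's comparison $e^{\tau\phi}\approx r^{-\tau}$ is not uniform in $\tau$, since $e^{\tau r^{\epsilon}}$ is unbounded on $B_{2c/\tau}$. Your dyadic-localization fallback, by contrast, would fail, because $\tau g'$ moves by $\gg 1$ per unit of $t$.

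\textbf{Finally, track the shift in $\mu_k$ rather than saying ``shifted by constants''.} With Lebesgue $L^2$ norms your relevant root is $\mu_k=k+\frac n2$, so the gap you need is $\operatorname{dist}(\tau-\frac n2,\mathbb Z)\ge\frac13$. The stated hypothesis guarantees this only for even $n$. For odd $n$ and $\tau=k+\frac n2$, cut-offs of $r^kY_k$ violate the estimate; this normalization issue is shared by the paper's (\ref{lap-car}).
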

\begin{proof}
For $f\in C^\infty_0(B_{R_0}\backslash\{0\})$,  the following Carleman estimates hold, see e.g. \cite{H83}
 \begin{align}
 C\|  e^{\tau \phi} r^2\partial_i(a_{ij} \partial_j f)\|\geq \tau^\frac{3}{2}\|e^{\tau\phi} r^{\epsilon/2} f\|+\tau \|e^{\tau\phi} r^{1+\epsilon/2} \nabla f\|+\tau^{-\frac{1}{2}}\|e^{\tau\phi} r^{2+\epsilon/2} | \nabla^2 f\|
 \label{carle-sec}
 \end{align}
 for some $0<\epsilon<1$.
 We aim to obtain an estimate to compare the Carleman estimates for elliptic operators with constant coefficients and variable coefficients.
For any $R\leq c\tau^{-1}$ with $c$ small, thanks to (\ref{carle-sec}) and Lipschitz continuity of $a_{ij}$,
we have
\begin{align}
\|e^{\tau\phi}r^2(\triangle-P(x,\partial))f\|_{L^2(B_{R})} &\leq C \|e^{\tau\phi} r^3 \partial_{ij}f\|_{L^2(B_{R})}  + C\|e^{\tau\phi}r^2 |\nabla f| \|_{L^2(B_{R})}  \nonumber \\
&\leq c\tau^{-(1-{\epsilon}/{2})} \|e^{\tau\phi} r^{{\epsilon}/{2}} r^2 \partial_{ij}f\|_{L^2(B_{R})}  + c\|e^{\tau\phi}r^{(1+{\epsilon}/{2})}   |\nabla f| \|_{L^2(B_{R})}  \nonumber \\
&\leq c\tau^{-\frac{1}{2}} \|e^{\tau\phi} r^{{\epsilon}/{2}} r^2 \partial_{ij}f\|_{L^2(B_{R})}  + c\|e^{\tau\phi}r^{(1+{\epsilon}/{2})}   |\nabla f| \|_{L^2(B_{R})}  \nonumber \\ &\leq c \|e^{\tau\phi} r^2 P(x, \partial)f\|_{L^2(B_{R})},
\end{align}
where we used the fact that $0<\epsilon<1$ and $c$ is small constant.
Thus, we get that
\begin{align}
 \|e^{\tau\phi}r^2\triangle f\|_{L^2(B_{R})} \leq \frac{3}{2} \|e^{\tau\phi}r^2 P(x,\partial)f\|_{L^2(B_{R})}
 \label{compare-ell}
\end{align}
for $R\leq c\tau^{-1}$.
We also need the following Carleman estimate for the Laplace operator. For $f\in C^\infty_0(\mathbb R^n\backslash\{0\})$, it holds that
\begin{align}
\tau \|r^{-\tau} f\|+  \|r^{-\tau +1} \nabla f\| \leq C\|r^{-\tau +2} \triangle f\|,
\label{lap-car}
\end{align}
where  $|\tau-k|\geq \frac{1}{3}$ for any $k\in \mathbb{N}$. Its proof can be obtained as a decomposition for the Laplace operator into eigenspaces, see e.g. \cite{PW98}, \cite{MZ26}. Each $k$ corresponds to spherical harmonics of degree $k$.
By the definition of $\phi$, if $R_0$ is small, then  $r^{-\tau} \approx e^{\tau \phi}$ for $0<r<R_0$, where the notation $b\approx a$ denotes as $b=O(a)$.
It follows from  (\ref{lap-car}) that
\begin{align}
\tau \|e^{\tau\phi} f\|_{L^2(B_{R_0})}+  \|e^{\tau\phi} r\nabla f\|_{L^2(B_{R_0})} \leq C \|e^{\tau\phi } r^2\triangle f\|_{L^2(B_{R_0})}.
\label{lap-car-1}
\end{align}
Let $A_{R_1, R_2}=B_{R_2}\backslash B_{R_1}$.
Thanks to (\ref{carle-sec}), for any $f\in C^\infty_0( A_{c\tau^{-1}, R_0})$ and $0<\epsilon<1$, we obtain that
\begin{align}
 C\| e^{\tau \phi} r^2 \partial_i(a_{ij} \partial_j f)\|&\geq \tau^\frac{3}{2}\|e^{\tau\phi} r^{\epsilon/2} f\|  +\tau^{\frac{1}{2}} \|e^{\tau\phi} r^{1+\epsilon/2} \nabla f\| 
 \nonumber\\
 & \geq \tau \tau^{\frac{1-\epsilon}{2}} 
\|e^{\tau\phi} f\|_{L^2(A_{c\tau^{-1}, R_0})}+ \tau^{\frac{1-\epsilon}{2}}  \|e^{\tau\phi} r\nabla f\|_{L^2(A_{c\tau^{-1}, R_0})} 
\nonumber \\
 & \geq  \tau \|e^{\tau\phi} f\|_{L^2(A_{c\tau^{-1}, R_0})} + \|e^{\tau\phi} r\nabla f\|_{L^2(A_{c\tau^{-1}, R_0})} .
 \label{cut-ell}
\end{align}

Now we choose a smooth cut-off function such that $\eta(x)=1$ on $|x|\leq c$, but $\eta=0$ on $|x|\geq 2c$.
Let $\eta_\tau(x)=\eta(\tau x)$. For $f\in C^\infty_0(B_{R_0}\backslash\{0\})$,  it follows from  (\ref{compare-ell}), (\ref{lap-car-1}) and (\ref{cut-ell}) that 
\begin{align}
\tau \|e^{\tau \phi} f\| +  \|e^{\tau \phi} r \nabla f\| & \leq \tau \|e^{\tau \phi} \eta_\tau  f\|+ \tau\|e^{\tau \phi}(1- \eta_\tau) f\| + \|e^{\tau \phi} r\nabla(\eta_\tau f)\| +  \|e^{\tau \phi} r\nabla((1-\eta_\tau) f))\|
\nonumber \\
&\leq C \|e^{\tau \phi} r^2 \triangle (\eta_\tau f)\| + C\|e^{\tau \phi} r^2  \partial_j (a_{ij} \partial_i((1-\eta_\tau) f))\| \nonumber \\
&\leq C \|e^{\tau \phi} r^2 \partial_i ( a_{ij} \partial_j (\eta_\tau f))\| +C \|e^{\tau \phi} r^2  \partial_j (a_{ij} \partial_i((1-\eta_\tau) f))\|.
\end{align}
Furthermore, by the Carleman  estimates (\ref{carle-sec}), the last inequality  and the fact that $0<\epsilon<1$, we obtain
\begin{align}
\tau \|e^{\tau \phi} f\| +  \|e^{\tau \phi} r \nabla f\| 
&\leq C\|e^{\tau \phi} r^2 \partial_i ( a_{ij} \partial_j  f)\|+ C\sum_{|\alpha|\leq 1} \tau^{2-|\alpha|} \|e^{\tau \phi} r^2 D^\alpha f\|_{A_{c\tau^{-1},  2c\tau^{-1}}}\nonumber \\
&\leq C\|e^{\tau \phi} r^2 \partial_i ( a_{ij} \partial_j  f)\|+ C\sum_{|\alpha|\leq 1} \tau^{\epsilon/2} \|e^{\tau \phi} r^{\epsilon/2} r^{|\alpha|} D^\alpha f\|_{A_{c\tau^{-1},  2c\tau^{-1}}}\nonumber \\
&\leq C\|e^{\tau \phi} r^2 \partial_i ( a_{ij} \partial_j f)\|.
\end{align}
This completes the proof of (\ref{car-imp}).
\end{proof}

We iterate the Carleman estimates (\ref{car-imp}) for Laplace-Beltrami operator on a coordinate patch on the manifold $\mathcal{M}$. 
%Let $\tau-2$ in th right hand side of the Carleman estimates  (\ref{car-imp}). 
For any $x_0\in  \mathcal{M}$, $f\in C^\infty_0(B_{R_0}(x_0) \backslash\{x_0\})$ with $R_0$ small, using the fact that $r e^{\phi}\approx 1$,
we have
\begin{align}
 C\| r^4 e^{\tau \phi} \triangle^2 f\|&\geq 
 \tau\| e^{(\tau-2) \phi} \triangle f\| \geq c_1\tau\| e^{\tau \phi} r^2\triangle f\| \nonumber \\
 &\geq c_2\tau^2\|e^{\tau\phi} f\|.
\end{align}

If we choose $\tau>C(1+\|W\|_{L^\infty}^\frac{1}{2})$ and  $|\tau-k|\geq \frac{1}{3}$ for any $k\in \mathbb{N}$, by the triangle inequality,  we obtain that
 \begin{align}
 C\| r^4 e^{\tau \phi} (\triangle^2 f-W(x)f)\|\geq \tau^2\|e^{\tau\phi} f\|.
 \label{Carle2}
 \end{align}

We apply the Carleman estimates (\ref{Carle2}) to the solutions of bi-Laplace equations to obtain the almost monotonicity results.
Note that $\|W\|_{L^\infty}\leq M$ for $M>1$ sufficiently large.
We do a scaling for the bi-Laplace equations  (\ref{bi-Laplace-1}). Let
$\bar u(x)=u(\frac{x}{M^{1/4}}).$
Then $\bar u(x)$  satisfies
\begin{equation}
\triangle^2 \bar u=\bar W(x)\bar u,
\label{bi-Laplace}
\end{equation}
where $\bar W(x)=\frac{W(x)}{M}$. Thus, $ \|\bar W\|_{L^\infty}\leq 1$. For ease of notations, we still write $\bar u$ and $\bar W(x)$ in (\ref{bi-Laplace}) as $u$ and $W(x)$.
 We define the doubling index as
\begin{equation}
{N}( B_r(x))=\log_2 \frac{\|u\|_{L^\infty(2 B_r(x))}}{\|u\|_{L^\infty( B_r(x))}}.
\label{defined}
\end{equation}
 For a positive number $\rho$, $\rho B$ is denoted
as the ball scaled by a factor $\rho>0$ with the same center as $ B$. ${N}(x, r)={N}( B_r(x))$ is the double index for $u$ on the ball $ B(x, r)$. We will write ${N}(x, r)$ as $N(r)$ if the center of the ball is understood.

Thanks to the Carleman estimates (\ref{Carle2}), we are able to show the following almost monotonicity results for doubling index.
\begin{lemma}
For any small $\delta\in (0, \frac{1}{10})$, there exist $C(g)$  and $\bar R(\delta, g)$,  such that the solutions $u$ of (\ref{bi-Laplace}) satisfy
\begin{align}
 t^{(1-\delta)N(x, R)+C\log_2 \delta}\leq \frac{\|u\|_{L^\infty(B_{tR}(x))}} { \|u\|_{L^\infty(B_{R}(x))}} \leq t^{ (1+\delta)N(x, tR)+C\log_2 1/\delta}
 \label{mono-del}
\end{align}
 for any $R>0$, $B_{tR}(x)\subset B_{\bar R}(0)$ and $t>2$. Furthermore,  there exists  $N_0(\delta, g)=C\delta^{-1}\log_2 \delta^{-1}$ such that 
\begin{align}
 t^{(1-\delta)N(x, R)}\leq \frac{\|u\|_{L^\infty(B_{tR}(x))}} { \|u\|_{L^\infty(B_{R}(x))}} \leq t^{ (1+\delta)N(x, tR)}
 \label{mono-del-1}
\end{align}
for  $N(x, R)\geq N_0(\delta, g)$.
\label{baa}
\end{lemma}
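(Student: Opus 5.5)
We apply the Carleman estimate (\ref{Carle2}) to a cut-off of $u$ and obtain a three-ball type inequality with sharp exponents. The precision comes from the weight $e^{\tau\phi}$, which behaves like $r^{-\tau}$ up to factors $e^{\tau r^{\epsilon}}$; these factors are close to $1$ for small $r$.

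\emph{Setup.} Fix $x$ as the origin and radii $R<tR$ inside $B_{\bar R}$. Choose a smooth cut-off $\eta$ with $\eta=0$ on $B_{\delta' R}$, $\eta=1$ on $A_{2\delta'R,\, tR/2}$ (roughly), and $\eta=0$ outside $B_{tR}$, where $\delta'$ is small and tied to $\delta$. Apply (\ref{Carle2}) to $f=\eta u$. Since $\triangle^2 u=Wu$, the right-hand side is supported only where $\nabla\eta\neq0$, i.e.\ in the two transition annuli. Interior estimates bound derivatives of $u$ up to order three in these annuli by $L^\infty$ norms on slightly larger annuli: we use elliptic/Caccioppoli estimates for $\triangle^2$, with $\|W\|\le1$ after scaling. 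Using $e^{\tau\phi}\approx r^{-\tau}$, this gives
\begin{align*}
\tau^2 (\rho)^{-\tau}\|u\|_{L^\infty(B_\rho)\text{-type}}\lesssim (\delta'R)^{-\tau}\|u\|_{L^\infty(B_{3\delta'R})}+(tR)^{-\tau}\|u\|_{L^\infty(B_{2tR})}
\end{align*}
for intermediate $\rho$. We pass between $L^2$ and $L^\infty$ via local boundedness for $\triangle^2 u=Wu$, which costs only polynomial factors in the radii ratios. These factors are absorbed into the $C\log_2(1/\delta)$ terms after choosing $\delta'$ as a power of $\delta$.

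\emph{Choice of $\tau$.} For the lower bound in (\ref{mono-del}), choose $\tau\approx(1-\delta)N(x,R)$, adjusted by at most $1/3$ to avoid integers, so that the term at scale $tR$ is absorbed. Comparing the sizes at $R$ and $2R$ then forces $\|u\|_{B_{tR}}\ge t^{\tau}\|u\|_{B_R}$ up to errors. For the upper bound, reverse the roles: take $\tau\approx(1+\delta)N(x,tR)$ and absorb the inner term, which gives $\|u\|_{B_{tR}}\le t^{\tau}\|u\|_{B_R}$. The losses $\delta$ in the exponent come from the non-rescaling parts of $\phi$, namely $g(t)=t-e^{\epsilon t}$ and $r^{-\tau}\approx e^{\tau\phi}$ only up to factors $e^{C\tau R^{\epsilon}}$. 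Requiring $\bar R(\delta,g)$ small makes $C\tau\bar R^{\epsilon}\le\delta\tau\log 2$, which is where $\delta$ enters. The additive terms $C\log_2(1/\delta)$ come from the cut-off gradients and thin annuli of relative width $\delta$. Finally, once $N\ge N_0=C\delta^{-1}\log_2\delta^{-1}$, the additive term is at most $\delta N$. Replacing $\delta$ by $\delta/2$ then yields (\ref{mono-del-1}).

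\emph{Main obstacle.} The delicate step is making the constant in front of $\tau$ exactly $1\pm\delta$ rather than a fixed multiple. This requires two things. First, Lemma~\ref{lemma-go}, in its rescaling-invariant form, is essential so that no $r^{\epsilon/2}$ weights distort the comparison between scales. Second, the three radii must be chosen with inner ratio $\sim\delta$ so that the two cut-off regions differ from $R$ and $tR$ only by factors $1\pm\delta$ in the exponent. I would first try to take the cut-off transitions at $[R(1-\delta),R]$ and $[tR,tR(1+\delta)]$, and check that the polynomial losses in $\delta^{-1}$ from derivative estimates produce exactly the $C\log_2(1/\delta)$ terms. The constraint $|\tau-k|\ge1/3$ only shifts $\tau$ by $O(1)$, which is also absorbed into these terms.
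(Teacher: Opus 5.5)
The cut-off in your \emph{Setup} paragraph would not give the lemma, and your reason for treating its losses as harmless is mistaken.

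If $\eta=0$ on $B_{\delta'R}$, the inner commutator term carries the weight $e^{\tau\phi(\delta'R)}\approx(\delta'R)^{-\tau}$. Comparing it with scale $2R$ then costs $(2/\delta')^{\tau}$ instead of $2^{\tau}(1+O(\delta))^{\tau}$. If the plateau ends at $tR/2$, the outer commutator term carries $(tR/2)^{-\tau}$, not the $(tR)^{-\tau}$ you wrote, which costs another $2^{\tau}$.

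Since $\tau$ is of order $N$, these are losses in the exponent itself, not polynomial factors: the power of $t$ in the lower bound would drop to about $N/\log_2(2/\delta')$. No choice of $\delta'$ as a power of $\delta$ moves them into the additive $C\log_2(1/\delta)$ term. Only the $\delta^{-O(1)}$ constants from Caccioppoli and from the $L^2$--$L^\infty$ conversion belong there.

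The scales in your last paragraph are the right ones, with two adjustments:
\begin{itemize}
\item Each transition annulus must lie strictly inside the ball whose norm you want on the right, e.g.\ $[(1-2\delta)R,(1-\delta)R]$ and $[(t-2\delta)R,(t-\delta)R]$ as in the paper. The derivative bounds on an annulus use $u$ on a slightly larger one, so with $[(1-\delta)R,R]$ you would get $\|u\|_{L^\infty(B_{(1+\delta)R})}$, which $N(x,R)$ does not control.
\item For the upper bound, the outer annulus must sit just inside $2tR$, so that $N(x,tR)$ enters. The paper uses $[(2t-2\delta)R,(2t-\delta)R]$ with middle radius $(t+\delta)R$.
\end{itemize}

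With that fixed, your argument is the paper's: the Carleman estimate (\ref{Carle2}) on $\eta u$, Caccioppoli for $[\triangle^2,\eta]u$, $\bar R$ small so that $(tR)^{\epsilon}\lesssim\delta$, $L^2\to L^\infty$, then $N_0\sim\delta^{-1}\log_2\delta^{-1}$ to absorb the additive term. The one difference is how $\tau$ is fixed.

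The paper takes $\tau\approx\frac{1}{\beta^2_R}\ln\frac{\|u\|_{L^2(B_{tR})}}{\|u\|_{L^2(B_{(2+\delta)R})}}$. This turns the Carleman inequality into an explicit three-ball inequality with exponent $\beta^1_R/(\beta^1_R+\beta^2_R)\le 1/((1-\delta)\log_2 t)$, which is then rearranged.

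You fix $\tau\approx(1\mp\delta)N$ and conclude by absorption. In direct form, for the lower bound this makes the inner term negligible against $\|u\|_{B_{2R}}=2^{N}\|u\|_{B_R}$, and the surviving $tR$-term gives the bound. For the upper bound it makes the $2tR$-term negligible against $\|u\|_{B_{tR}}$. Absorbing the opposite terms, as you phrase it, is the contradiction version of the same argument.

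The two routes are equivalent: taking your $\tau$ as large as absorption allows reproduces the paper's exponent. Yours avoids computing $\beta^1_R/(\beta^1_R+\beta^2_R)$. However, for the lower bound your $\tau$ is capped at about $N$. So when $(1-\delta)N$ is below the Carleman threshold, you must observe separately that (\ref{mono-del}) is trivial there, since its exponent is $\le 0$ once $C$ is large. The paper's choice keeps $\tau\ge C$ automatically and yields a three-ball inequality valid for all $N$.
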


\begin{proof}

Let us show the first inequality in (\ref{mono-del}). We observe that it is actually a three-ball inequality with a delicate dependence on the parameters.
Since the equation (\ref{bi-Laplace}) is translation invariant, we may consider $x$ as the origin.
We introduce a cut-off function $\eta(r)\in C^\infty_0( B_{2tR})$ with $tR<\frac{R_0}{3}$. Let $0<\eta(r)<1$ satisfy the following properties:
\begin{itemize}
\item $\eta(r)=0$ \ \ \mbox{if} \ $r(x)<(1-2\delta)R$ \ \mbox{or} \  $r(x)>(t-\delta)R$, \medskip
\item $\eta(r)=1$ \ \ \mbox{if} \ $(1-\delta)R<r(x)<(t-2\delta)R$, \medskip
\item $|\nabla^\alpha \eta|\leq \frac{C}{{(\delta R)}^{|\alpha|}}$
\end{itemize}
for $\alpha=(\alpha_1, \cdots, \alpha_n)$. Recall that $A_{R_1, R_2}=B_{R_2}\backslash B_{R_1}$. We also denote $ \|u\|_{L^2(A_{R_1, R_2})} $ as $\|u\|_{R_1, R_2}$.
Since the function $\eta u$ is supported in the annulus $A_{(1-2\delta)R, (t-\delta)R}$, applying the Carelman estimates (\ref{Carle2}) with $f=\eta u$, we have
\begin{align}
\tau^2  \| e^{\tau \phi} \eta u\|& \leq C\| r^4 e^{\tau \phi}(\triangle^2 (\eta u)-W(x)\eta u)\| \nonumber  \\
&=C \| r^4 e^{\tau \phi}[\triangle^2, \ \eta] u\|,
\end{align}
where we have used the equation (\ref{bi-Laplace}). Note that $[\triangle^2, \ \eta]$ is a three-order differential operator involving the derivatives of $\eta$.
The properties of $\eta$ yield that
\begin{align*}
\| e^{\tau \phi} u\|_{(1-\delta)R, (t-2\delta)R}&\leq C \left(\| e^{\tau \phi} u\|_{(1-2\delta)R, (1-\delta)R}+\| e^{\tau \phi} u\|_{(t-2\delta)R, (t-\delta)R} \right)\\
&+ C\left(\sum_{|\alpha|=1}^{3} \| r^{|\alpha|} e^{\tau \phi} \nabla^\alpha u \|_{(1-2\delta)R, (1-\delta)R}+\sum_{|\alpha|=1}^{3} \| r^{|\alpha|} e^{\tau \phi} \nabla^\alpha u \|_{(t-2\delta)R, (t-\delta)R}\right).
\end{align*}
By the fact that the weight function $\phi$ is radial and decreasing, we have
\begin{align}
 \| e^{\tau \phi} u\|_{(1-\delta)R, (t-2\delta)R}&\leq C \Big(e^{\tau \phi((1-2\delta)R)} \|  u\|_{(1-2\delta)R, (1-\delta)R}+e^{\tau \phi((t-2\delta)R)} \| u\|_{(t-2\delta)R, (t-\delta)R}\Big) \nonumber \\
&+ C\Big(e^{\tau \phi((1-2\delta)R)}(\sum_{|\alpha|=1}^{3} \| r^{|\alpha|}  \nabla^\alpha u \|_{(1-2\delta)R, (1-\delta)R} \nonumber \\ &+e^{\tau \phi((t-2\delta)R)} \sum_{|\alpha|=1}^{3} \| r^{|\alpha|}  \nabla^\alpha u \|_{(t-2\delta)R, (t-\delta)R}\Big).
\label{recall}
\end{align}
For the higher order elliptic equations
\begin{equation} (- \triangle)^m u+W(x) u=0,
\label{zhu}
\end{equation}
 the following interior Caccioppoli type inequality holds, see e.g. \cite{zhu18},
\begin{equation}
\sum^{2m-1}_{|\alpha|=0} \|r^{|\alpha|}\nabla^\alpha u\|_{c_3R,  c_2R} \leq C (\|W\|_{L^\infty}+1)^{2m-1} \|u\|_{c_4R,  c_1R}
\label{hihcac}
\end{equation}
 for all positive constants $0<c_4<c_3<c_2<c_1<1$.
Applying the estimate (\ref{hihcac}) with $m=2$ yields that
\begin{align}
\| r^{|\alpha|} \nabla^\alpha u\|_{(1-2\delta)R, (1-\delta)R}\leq CM^3 \| u\|_{R}
\label{caccio-2}
\end{align}
and
\begin{align}
\| r^{|\alpha|} \nabla^\alpha u\|_{(t-2\delta)R, (t-\delta)R}\leq  C M^3 \| u\|_{tR}
\label{caccio-3}
\end{align}
for all $1\leq |\alpha|\leq 3$. Assume $t-2\delta>2+\delta$. Therefore, it follows from (\ref{recall}), (\ref{caccio-2}) and (\ref{caccio-3})  that
\begin{align}
\|u\|_{L^2(A_{(1-2\delta)R, (2+\delta)R})}\leq C\Big( e^{\tau(\phi((1-2\delta)R)-\phi((2+\delta)R))} \|u\|_{L^2(B_{R})}+  e^{\tau(\phi((t-2\delta)R)-\phi((2+\delta)R))} \|u\|_{L^2(B_{tR})}\Big).
\label{inequal}
\end{align}

We denote the exponents as 
$$ \beta^1_R=\phi((1-2\delta)R)-\phi((2+\delta)R),\quad
 \beta^2_R=\phi((2+\delta)R)-\phi((t-2\delta)R).$$
The definition of $\phi$ indicates that 
 $\beta_1>0$ and $\beta_2>0$.
Adding $\|u\|_{L^2(B_{(1-2\delta)R})}$ to both sides of the inequality (\ref{inequal}) gives that
\begin{equation}
\|u\|_{L^2(B_{(2+\delta)R})}\leq C \big( e^{\tau\beta^1_R}\|u\|_{L^2(B_{R})}+  e^{-\tau\beta^2_R}\|u\|_{L^2(B_{tR})}   \big).
\end{equation}
We aim to  incorporate the second term in the right hand side of the last inequality into the left hand side. To this end, 
 we choose $\tau$ such that
$$C e^{-\tau\beta^2_R}\|u\|_{L^2(B_{tR})}\leq \frac{1}{2}\|u\|_{L^2(B_{(2+\delta)R})},   $$
which holds if
$$\tau\geq \frac{1}{\beta^2_R} \ln \frac{2C \|u\|_{L^2(B_{tR})}}{\|u\|_{L^2(B_{(2+\delta)R})} }.   $$
Then we obtain that
\begin{equation}
\|u\|_{L^2(B_{(2+\delta)R})}\leq Ce^{\tau\beta^1_R}\|u\|_{L^2(B_{R})}.
\label{substitute}
\end{equation}
Note that $\tau>C$ and $|\tau-k|\geq \frac{1}{3}$ for any $k\in \mathbb{N}$ is needed to apply the Carleman estimates (\ref{Carle2}) since $\|W\|_{L^\infty}\leq 1$ in (\ref{bi-Laplace}). We choose
$$ \tau=\lceil C+\frac{1}{\beta^2_R} \ln \frac{2C\|u\|_{L^2(B_{tR})}}{\|u\|_{L^2(B_{(2+\delta)R})} } \rceil+\frac{1}{3}, $$
where $\lceil f\rceil$ denotes as the integer part of $f$.
Substituting such $\tau$ in (\ref{substitute}) gives that
\begin{align}
\|u\|_{L^2(B_{(2+\delta)R})}^{\frac{\beta^2_R+\beta^1_R}{\beta^2_R}} \leq  e^{C\beta^1_R} C^{\frac{ \beta^1_R+\beta^2_R}{\beta^2_R}}\|u\|_{L^2(B_{tR})}^{\frac{\beta^1_R}{\beta^2_R}} \|u\|_{L^2(B_{R})}.
\end{align}
Raising exponent $\frac{\beta^2_R}{\beta^2_R+\beta^1_R}$ to both sides of the last inequality shows that
\begin{align}
\|u\|_{L^2(B_{(2+\delta)R})} \leq C
e^{\frac{C\beta^1_R \beta^2_R}{ \beta^1_R +\beta^2_R}} 
\|u\|_{L^2(B_{tR})}^{\frac{\beta^1_R}{\beta^1_R+\beta^2_R}} \|u\|_{L^2(B_{R})}^{\frac{\beta^2_R}{\beta^1_R+\beta^2_R}}.
\label{three-1}
\end{align}
%Set $\alpha_1={\frac{\beta^1_R}
%{\beta^1_R+\beta^2_R}}$. 
Notice that the last inequality is a three-ball inequality.
Next we estimate the value of ${\frac{\beta^1_R}{\beta^1_R+\beta^2_R}}$ as follows. From the definition of the weight function $\phi$, we have
\begin{align}
\beta^1_R+\beta^2_R &=\ln (t-2\delta)R-[(t-2\delta)R]^\epsilon- \ln (1-2\delta)R+[(1-2\delta)R]^\epsilon  \nonumber \\
&=\ln \frac{t-2\delta}{1-2\delta}-R^\epsilon[ (t-2\delta)^\varepsilon- (1-2\delta)^\epsilon ].
\end{align}
Since $\delta$ is small, we have  $\ln (t-2\delta) \sim\ln t-\frac{2\delta}{t}$, and $\ln (1-2\delta)\sim -{2\delta}$, where $a\sim b$ denotes as $a-b=o(b)$.
We also choose $(tR)^\epsilon\leq \frac{\delta}{100}$, then 
\begin{align}
    \beta^1_R+\beta^2_R \geq \ln t+c_1\delta.
\end{align}
Now we compute $\beta^1_R$. 
We have
\begin{align}
     \beta^1_R&=\ln (2+\delta)R- [ (2+\delta)R]^\epsilon-\ln (1-\delta)R+ [ (1-\delta)R]^\epsilon  \nonumber \\
     &=\ln \frac{2+\delta}{1-\delta}-R^\epsilon[ (2+\delta)^\epsilon- (1-\delta)^\epsilon ] \nonumber \\
     &\leq \ln2 +c_2 \delta,
\end{align}
where we used $\ln (2+\delta)\sim \ln 2+\frac{\delta}{2}$ and $(2R)^\epsilon\leq (\frac{2}{t})^\epsilon\frac{\delta}{100}$. We can choose $0<c_1<c_2$.
Then 
\begin{align}
   {\frac{\beta^1_R}{\beta^1_R+\beta^2_R}}&\leq \frac{ \ln2}{(\ln t  +c_1 \delta )\frac{1}{1+c_2\delta/\ln 2} }\nonumber \\
   &\leq \frac{\ln 2} { (\ln t+c_1\delta)(1-c_2\delta/\ln 2)} \nonumber \\
   &\leq \frac{\ln 2} { (1-c_2\delta/\ln 2)\ln t+c_1\delta/2}. \nonumber \\
\end{align}
Let $\delta_1\sim c_2\delta/\ln 2$. We obtain that
\begin{align}
   {\frac{\beta^1_R}{\beta^1_R+\beta^2_R}}\leq \frac{1}{(1-\delta_1)log_2 t}.
\end{align}
It follows from (\ref{three-1}) and elliptic estimates (i.e. $L^\infty$ norm can be controlled by a $L^2$ norm in a larger ball) that
\begin{align}
\|u\|_{L^\infty(B_{2R})} \leq C\delta_1^{-\frac{n}{2}}
\|u\|_{L^\infty(B_{tR})}^{\frac{1}{(1-\delta_1)log_2 t}} \|u\|_{L^\infty(B_{R})}^{\frac{(1-\delta_1)log_2 t-1}{(1-\delta_1)log_2 t}}.
\end{align}
Then we get 
\begin{align}
\pr{\frac{\|u\|_{L^\infty(B_{2R})}} { \|u\|_{L^\infty(B_{R})}}}^{(1-\delta_1)log_2 t}\leq  \delta_1^{-C log_2 t}\frac{\|u\|_{L^\infty(B_{tR})}} { \|u\|_{L^\infty(B_{R})}},
\end{align}
which is equivalent to 
\begin{align}
    t^{(1-\delta_1)\log_2 \frac{\|u\|_{L^\infty(B_{2R})}} { \|u\|_{L^\infty(B_{R})}} }\leq C t^{-C\log_2 \delta_1}\frac{\|u\|_{L^\infty(B_{tR})}} { \|u\|_{L^\infty(B_{R})}}.
\end{align}
Therefore, we arrive at
\begin{align}
    t^{(1-\delta_1)N(R)+C \log_2 \delta_1} \leq \frac{\|u\|_{L^\infty(B_{tR})}} { \|u\|_{L^\infty(B_{R})}}.
\end{align}
Identifying $\delta_1$ as $\delta$, we complete the proof of the left-hand side of (\ref{mono-del}). 

Next we prove the second inequality in the right hand side of (\ref{mono-del}) using the similar arguments. Note that this inequality is a three-ball inequality with some different parameters from the previous one. 
We introduce a cut-off function $\eta(r)\in C^\infty_0( B_{2tR})$ with $tR<\frac{R_0}{3}$. Let $0<\eta(r)<1$ satisfy the following properties:
\begin{itemize}
\item $\eta(r)=0$ \ \ \mbox{if} \ $r(x)<(1-2\delta)R$ \ \mbox{or} \  $r(x)>(2t-\delta)R$, \medskip
\item $\eta(r)=1$ \ \ \mbox{if} \ $(1-\delta)R<r(x)<(2t-2\delta)R$, \medskip
\item $|\nabla^\alpha \eta|\leq \frac{C}{{(\delta R)}^{|\alpha|}}$
\end{itemize}
for $\alpha=(\alpha_1, \cdots, \alpha_n)$. Note that the function $\eta u$ is supported in the annulus $A_{(1-2\delta)R, (2t-\delta)R}$. Substituting $f=\eta u$ in  Carelman estimates (\ref{Carle2}) to have 
\begin{align}
 \tau^2 \| e^{\tau \phi}\eta u\|& \leq C \| r^4 e^{\tau \phi}[\triangle^2, \ \eta] u\|,
\end{align}
where  the equation (\ref{bi-Laplace}) is used. 
By the properties of $\eta$, we have
\begin{align*}
\| e^{\tau \phi} u\|_{(1-\delta)R, (2t-2\delta)R}&\leq C \pr{\| e^{\tau \phi} u\|_{(1-2\delta)R, (1-\delta)R}+\| e^{\tau \phi} u\|_{(2t-2\delta)R, (2t-\delta)R} }\\
&+ C\pr{\sum_{|\alpha|=1}^{3} \| r^{|\alpha|} e^{\tau \phi} \nabla^\alpha u \|_{(1-2\delta)R, (1-\delta)R}+\sum_{|\alpha|=1}^{3} \| r^{|\alpha|} e^{\tau \phi} \nabla^\alpha u \|_{(2t-2\delta)R, (2t-\delta)R}}.
\end{align*}
The properties of  weight function $\phi$  yield that
\begin{align}
\| e^{\tau \phi} u\|_{(1-\delta)R, (2t-2\delta)R}&\leq C \pr{e^{\tau \phi((1-2\delta)R)} \|  u\|_{(1-2\delta)R, (1-\delta)R}+e^{\tau \phi((2t-2\delta)R)} \| u\|_{(2t-2\delta)R, (2t-\delta)R} }\nonumber \\
&+ C \Bigg(e^{\tau \phi((1-2\delta)R)}\sum_{|\alpha|=1}^{3} \| r^{|\alpha|}  \nabla^\alpha u \|_{(1-2\delta)R, (1-\delta)R}\nonumber \\ & +e^{\tau \phi((2t-2\delta)R)} \sum_{|\alpha|=1}^{3} \| r^{|\alpha|}  \nabla^\alpha u \|_{(2t-2\delta)R, (2t-\delta)R} \Bigg).
\label{recall-1}
\end{align}
 Assume $2t-2\delta>t+\delta$. Therefore, by Caccioppoli type inequality  (\ref{hihcac}), we get that
\begin{align}
\|u\|_{L^2(A_{(1-2\delta)R, (t+\delta)R})}\leq C \pr{ e^{\tau\pr{\phi((1-2\delta)R)-\phi((t+\delta)R)}} \|u\|_{L^2(B_{R})}+   e^{\tau(\phi((2t-2\delta)R)-\phi((t+\delta)R))} \|u\|_{L^2(B_{2tR})}}.
\label{inequal-1}
\end{align}
We choose parameters
$$ \beta^1_R=\phi((1-2\delta)R)-\phi((t+\delta)R),\quad 
\beta^2_R=\phi((t+\delta)R)-\phi((2t-2\delta)R).$$
The properties of weight function $\phi$ show that  $\beta^1_R>0$ and  $\beta^2_R>0$.
We add $\|u\|_{L^2(B_{(1-2\delta)R})}$ to both sides of the inequality (\ref{inequal-1}) to have
\begin{equation}
\|u\|_{L^2(B_{(t+\delta)R})}\leq C\big( e^{\tau\beta^1_R}\|u\|_{L^2(B_{R})}+ e^{-\tau\beta^2_R}\|u\|_{L^2(B_{2tR})}   \big).
\end{equation}
To incorporate the second term in the right hand side of the last inequality into the left hand side,
 we choose $\tau$ such that
$$Ce^{-\tau\beta^2_R}\|u\|_{L^2(B_{2tR})}\leq \frac{1}{2}\|u\|_{L^2(B_{(t+\delta)R})},   $$
which is true if
$$\tau\geq \frac{1}{\beta^2_R} \ln \frac{2C\|u\|_{L^2(B_{2tR})}}{\|u\|_{L^2(B_{(t+\delta)R})} }.   $$
Thus, it follows that
\begin{equation}
\|u\|_{L^2(B_{(t+\delta)R})}\leq C e^{\tau\beta^1_R}\|u\|_{L^2(B_{R})}.
\label{substitute-1}
\end{equation}
To apply the Carleman estimates (\ref{Carle2}), we choose
$$ \tau=\lceil C+\frac{1}{\beta^2_R} \ln \frac{2C\|u\|_{L^2(B_{2tR})}}{\|u\|_{L^2(B_{(t+\delta)R})} }\rceil+\frac{1}{3}. $$
Applying such $\tau$ in (\ref{substitute-1}) to have
\begin{align}
\|u\|_{L^2(B_{(t+\delta)R})}^{\frac{\beta^2_R+\beta^1_R}{\beta^2_R}} \leq e^{C\beta^1_R} C^{\frac{ \beta^1_R+\beta^2_R}{\beta^2_R}}\|u\|_{L^2(B_{2tR})}^{\frac{\beta^1_R}{\beta^2_R}} \|u\|_{L^2(B_{R})}.
\end{align}
We raise exponent $\frac{\beta^2_R}{\beta^2_R+\beta^1_R}$ to both sides of the last inequality to have
\begin{align}
\|u\|_{L^2(B_{(t+\delta)R})} \leq  Ce^{\frac{C\beta^1_R \beta^2_R}{\beta^1_R +\beta^2_R }}
\|u\|_{L^2(B_{2tR})}^{\frac{\beta^1_R}{\beta^1_R+\beta^2_R}} \|u\|_{L^2(B_{R})}^{\frac{\beta^2_R}{\beta^1_R+\beta^2_R}}.
\label{three-1-1-1}
\end{align}

We estimate the value of ${\frac{\beta^1_R}{\beta^1_R+\beta^2_R}}$ as follows. The definition of $\phi$ gives that
\begin{align}
\beta^1_R+\beta^2_R &=\ln (2t-2\delta)R-[(2t-2\delta)R]^\epsilon- \ln (1-2\delta)R +[(1-2\delta)R]^\epsilon  \nonumber \\
&=\ln \frac{2t-2\delta}{1-2\delta}-R^\epsilon[ (2t-2\delta)^\epsilon- (1-2\delta)^\epsilon ].
\end{align}
As $\delta$ is chosen to be small, we have  $\ln (2t-2\delta)\sim \ln 2t-\frac{\delta}{t}$. It also holds that $(2tR)^\varepsilon\leq \frac{\delta}{50}$. Then
\begin{align}
    \beta^1_R+\beta^2_R  
    %\ln 2t-\frac{\delta}{t} +2\delta -c\delta \nonumber \\ 
    \leq \ln 2t+c_1\delta.
\end{align}
Now we compute $\beta^2_R$ to
 have
\begin{align}
     \beta^2_R&=\ln (2t-2\delta)R- [ (2t-2\delta)R]^\epsilon-\ln (t+\delta)R+ [ (t+\delta)R]^\epsilon  \nonumber \\
     &=\ln \frac{2t-2\delta}{t+\delta}-R^\epsilon[ (2t-2\delta)^\epsilon- (t+\delta)^\epsilon ] \nonumber \\
     &\geq \ln2 -c_2 \delta,
\end{align}
where we used $\ln (2t-2\delta)\sim \ln 2t-\frac{\delta}{t}$ and $(2tR)^\epsilon\leq \frac{\delta}{50}$. We can also choose $0<c_1<c_2$ to have
\begin{align}
   {\frac{\beta^2_R}{\beta^1_R+\beta^2_R}}&\geq \frac{ \ln2}{(\ln 2t  +c_1 \delta )\frac{1}{1-c_2\delta/\ln 2} }\nonumber \\
   &\geq \frac{\ln 2} { (\ln 2t+c_1\delta)(1+c_2\delta/\ln 2)} \nonumber \\
   &\geq \frac{\ln 2} {(1+c_2\delta/\ln 2) \ln 2t+c_3\delta}
\end{align}
for some $c_3>c_2$.
Hence we obtain that
\begin{align}
{\frac{\beta^2_R}{\beta^1_R+\beta^2_R}}&\geq
\frac{1} { (1+c_2\delta/\ln 2)(1+\log_2 t)+c_3\delta/\ln 2}\nonumber \\
&\geq \frac{1} { (1+\delta_1)\log_2t+1}
\end{align}
by selecting $\delta_1\sim\frac{(2c_2+c_3)\delta }{\ln 2}$.
%Let $\delta_1=c_2\delta/\ln 2$. We have
%\begin{align}
%   {\frac{\beta^2_R}{\beta^1_R+\beta^2_R}}\geq \frac{1}{(1+\delta_1)log_2 t+1}.
%\end{align}
Then 
\begin{align}
     {\frac{\beta^1_R}{\beta^1_R+\beta^2_R}}\leq \frac{(1+\delta_1)log_2 t}{(1+\delta_1)log_2 t+1}.
\end{align}
It follows from (\ref{three-1-1-1}) and elliptic estimates that
\begin{align}
\|u\|_{L^\infty(B_{tR})} \leq  C\delta_1^{-\frac{n}{2}} \|u\|_{L^\infty(B_{2tR})}^{\frac{(1+\delta_1)log_2 t}{(1+\delta_1)log_2 t+1}} \|u\|_{L^\infty(B_{R})}^{\frac{1}{(1+\delta_1)log_2 t+1}},
\end{align}
which yields that
\begin{align}
\frac{\|u\|_{L^\infty(B_{tR})}} { \|u\|_{L^\infty(B_{R})}} \leq {\delta_1}^{-C \log_2 t}
(\frac{\|u\|_{L^\infty(B_{2tR})}} { \|u\|_{L^\infty(B_{tR})}})^{(1+\delta_1)log_2 t}.
\end{align}
We obtain that
\begin{align}
 \frac{\|u\|_{L^\infty(B_{tR})}} { \|u\|_{L^\infty(B_{R})}} \leq t^{-C \log_2 {\delta_1}}
(2^{N(tR)})^{(1+\delta_1)log_2 t}.
\end{align}
Therefore, we arrive at
 \begin{align}
 \frac{\|u\|_{L^\infty(B_{tR})}} { \|u\|_{L^\infty(B_{R})}} \leq t^{ (1+\delta_1)N(tR) -C\log_2 {\delta_1}},
\end{align}
which gives the right hand side of (\ref{mono-del}) by letting $\delta_1$ be $\delta$. 

Recall that we have chosen $tR<\frac{R_0}{3}$ and we choose $(tR)^\epsilon\leq \frac{\delta}{100}$. Thus, we can choose $tR\leq R_0 (\frac{\delta}{100})^{\frac{1}{\epsilon}}=\bar R (\delta, g)$ by fixing the value of $\epsilon$.
If we can choose $N_0\geq -C\delta^{-1}\log_2\delta$, it follows from (\ref{mono-del}) that
\begin{align*}
 t^{(1-2\delta)N(x, R)}\leq \frac{\|u\|_{L^\infty(B_{tR}(x))}} { \|u\|_{L^\infty(B_{R}(x))}}.
\end{align*}
By considering $2\delta$ as $\delta$ and $\bar R(\delta,g)$ as $\bar R(\delta/2,g)$, the first inequality in (\ref{mono-del-1}) is derived.

If $N(x,R)\geq N_0$, from (\ref{mono-del}), we learn that $(1-\delta)N_0+C\log_2 \delta\leq (1+\delta)N(x, tR)-C\log_2 \delta$.  Then $N(x, tR)\geq (1-2\delta)N_0+C\log_2  \delta.$ If we choose $N_0\geq- {C}{\delta}^{-1}\log_2 \delta $, thanks to  (\ref{mono-del}) again, we obtain that
\begin{align}
\frac{\|u\|_{L^\infty(B_{tR}(x))}} { \|u\|_{L^\infty(B_{R}(x))}} \leq t^{ (1+2\delta)N(x, tR)}.
\end{align}
By considering $2\delta$ as $\delta$ and $\bar R(\delta,g)$ as $\bar R(\delta/2,g)$, we get the second inequality in (\ref{mono-del-1}).

\begin{remark}
Applying the Carleman estimates (\ref{car-imp}) to second order elliptic equations $\partial_i ( a_{ij} \partial_j u)+b_i\partial_i u+V(x)u=0$, we can obtain the almost monotonicity results for the doubling index of solutions of the second order elliptic equations as Lemma \ref{baa}. 
\end{remark}

%Since $R\leq R_0$,  Thus, we further choose $R\leq R_0 (\frac{\delta}{100})^{\frac{2}{\varepsilon}}$. Then $t\leq (\frac{\delta}{100})^{-\frac{1}{\varepsilon}}R^{-1}_0$
\end{proof}

\section{Upper bounds of nodal sets}
%Let $n\geq 3$ in this section.
After those preparations, we follow the ideas of the combinatorial arguments in the celebrated work of \cite{Lo18} to prove the polynomial upper bounds of nodal sets for solutions of bi-Laplace equation in (\ref{bi-Laplace-1}).

Let $x_1, x_2, \cdots, x_{n+1}$ be the vertices of a simplex $S$ in $\mathbb R^n$. Denote $\diam(S)$ as the diameter of the simplex $S$. We use $width(S)$ to denote the minimum distance between two parallel hyperplanes that contain $S$. The symbol $w(S)$ is defined as the relative width of $S$:
$$ w(S)=\frac{width(S)}{diam(S)}. $$
We assume $w(S)>\gamma$ for some constant $\gamma$. In particular, $x_1, x_2, \cdots, x_{n+1}$ are assumed not to be on the same hyperplane. We denote $x_0$ as the barycenter of $S$, i.e. $x_0=\frac{1}{n+1}\sum^{n+1}_{i=1} x_i$. Roughly speaking, next lemma shows that the doubling index will accumulate at the barycenter of the simplex if the doubling index at the vertices $\{x_1, x_2, \cdots, x_{n+1}\}$ are large. Using the results of almost monotonicity of the double index in Lemma \ref{baa}, we can show the following simplex lemma.
\begin{lemma}
Let $ B_i:=B_{r_i}(x_i)$ be ball centered at $x_i$ with $r_i$ less than $\frac{K diam(S)}{2}$ for some $K=K(\gamma, n)$, $i=1, 2, \cdots, n+1$. There exist positive constants $c=c(\gamma, n)$, $C=C(\gamma, n)\geq K$, $r=r(\gamma, g)$ and $N_0=N_0(\gamma, g)$ such that if $S\subset B_r$ and $N( B_i)>N$ with $N>N_0$ for each $i$, $i=1, 2, \cdots, n+1$, then
\begin{align}
N(x_0, C\diam(S))>(1+c)N. 
\label{simplex-mon}
\end{align} 
\label{simplex}
\end{lemma}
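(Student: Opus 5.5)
This follows Logunov's simplex lemma argument. The only input about the equation is the almost monotonicity of Lemma \ref{baa}, which replaces the frequency-function monotonicity. Normalize by a translation and a dilation so that $\diam(S)=1$; the smallness of $\diam(S)$ is absorbed into the choice of $r=r(\gamma,g)$, so that every ball used below lies in $B_{\bar R(\delta,g)}$. Fix a small $\delta=\delta(\gamma,n)$, to be chosen at the end, and take $N_0\geq N_0(\delta,g)$ from (\ref{mono-del-1}).

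\emph{Step 1 (growth from the vertices).} Since $N(B_i)>N\geq N_0$, the first inequality of (\ref{mono-del-1}) with $t=T/r_i$ gives
$$\|u\|_{L^\infty(B_{T}(x_i))}\geq \pr{\frac{T}{r_i}}^{(1-\delta)N}\|u\|_{L^\infty(B_{r_i}(x_i))}$$
for every $T>2r_i$. Choose $K$ small, so that $r_i\leq K/2$.

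\emph{Step 2 (comparing the sup norms).} Let $M_0=\|u\|_{L^\infty(B_{\rho}(x_0))}$ for a suitable $\rho$ comparable to $1$, and let $x^*\in \overline{B_\rho(x_0)}$ be a point where $|u(x^*)|=M_0$.

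Because $w(S)>\gamma$, the barycenter $x_0$ lies at distance at least $c(\gamma,n)$ from $\partial S$. So some vertex, say $x_1$, satisfies $|x_1-x^*|\leq (1-c'(\gamma,n))\,|x_i-x_0|_{\max}$ in the appropriate sense. More precisely, I would use the geometric fact from Logunov's paper: for any point $y$ near $x_0$ there is a vertex $x_j$ with
$$|x_j-y|\leq \max_i|x_i-x_0|-c(\gamma,n).$$
Write $a=\max_i|x_i-x_0|\leq 1$. Then $B_{a-c}(x_j)\ni x^*$, hence
$$\|u\|_{L^\infty(B_{a-c}(x_j))}\geq M_0 .$$

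On the other hand, $B_{C_1}(x_0)\supset B_{C_1-a}(x_j)$ for a large $C_1$. By Step 1 applied between the radii $a-c$ and $C_1-a$,
$$\|u\|_{L^\infty(B_{C_1}(x_0))}\geq \pr{\frac{C_1-a}{a-c}}^{(1-\delta)N}M_0\cdot(\text{correction}).$$
Comparing with the trivial ratio $(C_1/a)^{N}$ one would get from a ball centered at $x_0$ alone yields an extra factor $(1+c)$ in the exponent. The cleanest way to organize this is to take the comparison radii so that $\log\frac{C_1-a}{a-c}\geq(1+2c)\log\frac{C_1}{a}$ fails in general. So instead I would follow Logunov's version: estimate $\log_2\bigl(\|u\|_{L^\infty(B_{2C\diam S}(x_0))}/\|u\|_{L^\infty(B_{C\diam S}(x_0))}\bigr)$ directly.

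\emph{Step 3 (the doubling index at $x_0$).} Concretely, with $C$ large:
\begin{itemize}
\item The upper bound for $\|u\|_{L^\infty(B_{C}(x_0))}$ comes from $B_{C}(x_0)\subset B_{C+a-c}(x_j)$, which has a radius strictly larger, with an improvement of $c$ relative to $C+a$.
\item The lower bound for $\|u\|_{L^\infty(B_{2C}(x_0))}$ comes from $B_{2C}(x_0)\supset B_{2C-a}(x_j)$.
\end{itemize}
Applying Step 1 to the growth of the sup norm of $x_j$-centered balls from radius $K/2$ up to radius $C$ and to radius $2C$, and using the almost monotonicity (\ref{mono-del-1}) in both directions, gives the number of doublings on $[C+a-c,\,2C-a]$ for the ball at $x_j$. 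This produces
$$N(x_0,C)\geq (1-\delta)N\,\frac{\log\frac{2C-a}{C+a-c}}{\log 2}.$$
This is useless unless the vertex gain is exploited at \emph{many} scales. The real mechanism is: all $n+1$ vertices have index $>N$ at scale about $K$, so $|u|$ grows like $(\rho/K)^{(1-\delta)N}$ around every vertex. The $L^\infty$ norm on $B_{C}(x_0)$ is attained near some point, and that point is closer to some vertex than $x_0$ is to the farthest vertex. Iterating this with ratio $\log\frac{C}{K}$ versus $\log\frac{C}{K}+\log(1+c)$ gains a multiplicative factor $1+c$ once $K$ is chosen small relative to $c(\gamma,n)$. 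Finally $\delta$ is taken small compared with $c$, and $N_0$ large so that the additive errors $C\log_2(1/\delta)$ from (\ref{mono-del}) are absorbed.

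\emph{Main obstacle.} The bookkeeping in Step 3 is the hard part: choosing $K$, $C$ and $\delta$ so that the geometric gain $c(\gamma,n)$ from the barycenter beats the $(1\pm\delta)$ losses in Lemma \ref{baa}. I would first try Logunov's explicit choice, namely $K$ tiny with $\log(1/K)\gg 1/c$, and compare $\log\frac{C}{K}$ with $\log\frac{C(1+c)}{K}$.
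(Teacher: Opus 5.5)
There is a genuine gap: the proposal never finds the geometric estimate that drives the lemma, and the geometric facts it does state are false. The paper works at a scale \emph{large} compared with the simplex. It takes $\rho=K\diam(S)$ with $K\geq 2/\gamma$ large, and almost monotonicity lets all $B_i$ be given this common radius $\rho$. The width condition then gives $\hat\tau=\hat\tau(\gamma,n)>0$ with $B_{\rho(1+\hat\tau)}(x_0)\subset\bigcup_i B_\rho(x_i)$. Indeed, for $|y-x_0|=R\gg\diam(S)$ some vertex has $\langle x_j-x_0,(y-x_0)/R\rangle\geq c(\gamma,n)\diam(S)$, so $|y-x_j|\leq R-c\diam(S)+O(\diam(S)^2/R)$. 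Hence $\|u\|_{L^\infty(B_{\rho(1+\hat\tau)}(x_0))}\leq\|u\|_{L^\infty(B_\rho(x_j))}$ for the maximizing vertex $j$, while $B_{t\rho}(x_j)\subset B_{t\rho(1+c_1/t)}(x_0)$. Combining this with $\|u\|_{L^\infty(B_{t\rho}(x_j))}\geq t^{(1-\delta)N}\|u\|_{L^\infty(B_\rho(x_j))}$ and the upper half of almost monotonicity at $x_0$ gives $\bigl[t(1+c_1/t)/(1+\hat\tau)\bigr]^{(1+\delta)\hat N}\geq t^{(1-\delta)N}$. One then fixes $t$ large so the base is at most $t^{1-c_2}$, takes $\delta\ll c_2$, and finally chooses $N_0$. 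A single scale suffices; there is no iteration.

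Your Step 2 has the right direction: bound a barycentric ball by a smaller vertex ball. But you place it at scale $\sim\diam(S)$, where it fails. For a regular simplex and $y=x_0$, every vertex is at distance exactly $a=\max_i|x_i-x_0|$, so ``$|x_j-y|\leq a-c$'' is false. In the same case, the Step 3 inclusion $B_C(x_0)\subset B_{C+a-c}(x_j)$ fails for every single $j$. A gain appears only for a \emph{union} of vertex balls with radius much larger than $\diam(S)$, i.e.\ with $K$ large. Your choice ``$K$ small'' goes the wrong way: vertex balls of radius below $|x_i-x_0|$ cannot cover any ball around $x_0$. Moreover, the heuristic relative gain $\log(1+c)/\log(C/K)$ tends to $0$ as $K\to0$. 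You note yourself that the Step 3 bound is useless, and the closing ``iterate at many scales'' paragraph supplies no estimate that would produce the factor $1+c$. The missing ingredient is exactly the large-$K$ covering $B_{\rho(1+\hat\tau)}(x_0)\subset\bigcup_iB_\rho(x_i)$ above.
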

Since  the arguments follow from \cite{Lo18}, we include the proof of the lemma in Appendix for the completeness of the presentation.
%\section{Simplex lemma} \label{sec:sl}
%Let $x_1, \dots , x_{n+1}$ be vertices of a simplex $S$ in $\mathbb{R}^n$. The symbol $diam(S)$ will denote the diameter of $S$ and
% by $width(S)$ we will denote the width of $S$, i.e. the minimum distance between a pair of parallel hyperplanes such that $S$ is contained between them. Define the relative width of $S$: $w(S)= width(S) / diam(S)  $.
% Let $a>0$ and  assume that $w(S) >a$. In particular we assume that $x_1, \dots , x_{n+1}$ do not lie on the same hyperplane. For the purposes of the paper there will be sufficient a particular choice of $a$, which depends on the dimension $n$ only, the choice will be specified in Section \ref{sec:nq}.
%Denote by $x_0$ the barycenter of $S$.
% Formulate lemma for harmonic functions first. After that  on manifold.
%\begin{lemma} \label{sl}
% Let $B_i$ be balls with centers at $x_i$ and radii not greater than $\frac{K}{2} diam(S)$, $i=1, \dots, n+1$, where $K=K(a,n)$ is from the Euclidean geometry lemma.
% There exist positive numbers $c=c(a,n)$, $C=C(a,n)\geq K$, $r=r(M,g,O,a)$, $N_0=N_0(M,g,O,a)$ such that
% if $S \subset B(O,r)$ and
%if $N(B_i)> N$ for each $x_i$, $i=1 \dots {n+1}$, where $N$ is a number greater than $N_0$,
% then $N(x_0, C diam(S))> N(1+c)$.

%\end{lemma} 

Next we consider Cauchy propagation of smallness for bi-Laplace equations in the half ball. The Cauchy propagation of smallness for Laplace operator was derived using the frequency function approach in \cite{ARRS09}. See also \cite{Lin91}. We are able to obtain these results using Carleman estimates.
Denote the half ball by ${B}^{+}_{R}=\{x| \ |x|<R, x_n>0\}$ and the ball on the boundary by ${\bf B}_{R}=\{x| \ |x|<R, x_n=0 \}$.
We consider the bi-Laplace equation
\begin{align}
    \triangle^2 u=W(x)u\quad \quad \mbox{in} \quad  B^+_3.
    \label{bi-ci}
\end{align} 
  We choose a weight function
$   \psi (x) = e^{sh(x)} $,
where $h(x) = -|x - b|,$ and $b = (0,...,0,-b_{n})$ with  some small positive $b_{n} > 0$. 

%(\textcolor{blue}{need to include $s$ which is $\frac{c}{\delta}$ if consider $B_\delta$})

\begin{lemma}\label{prop GCE}
 Let $s$ be a fixed large constant and $v\in H^2({B}^{+}_{3})$ 
 with $\supp v\subset {B}^{+}_{3}\cup {\bf B}_3$,  Then there exists a positive constant $C_{0}$  such that 
 %for any $v \in C^{\infty}(\mbb{B}^{+}_{1/2})$ and 
%$$
%\tau > C_{s}(1 + \|H\|^{\frac{2}{3}}_{L^{\infty}}),
%$$
%one has
 \begin{align}
     &\|e^{\tau\psi}( \triangle^2 v-W(x)v)\|_{L^{2}({B}^{+}_{3})}+ \tau^{\frac{3}{2}}s^2\|e^{\tau\psi}\triangle v\|_{L^{2} ({\bf B}_{3})} + 
 \tau^{\frac{1}{2}}s\|e^{\tau\psi}\nabla\triangle v\|_{L^{2} ({\bf B}_{3})}
   \nonumber \\
&+\tau^{3}s^4\|e^{\tau\psi}v\|_{L^{2}({\bf B}_{3})} + 
 \tau^{2}s^3\|e^{\tau\psi}\nabla v\|_{L^{2} ({\bf B}_{3})}   
    \geq C\tau^{3} s^4\|e^{\tau\psi} v\|_{L^{2}({B}^{+}_{3})}.
    \label{carleman-2}
    \end{align} 
\end{lemma}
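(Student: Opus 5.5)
The plan is to factor $\triangle^2=\triangle\circ\triangle$ and iterate a second-order Carleman estimate with boundary terms for the Laplacian in the half ball, using the weight $\psi=e^{sh}$ with $h=-|x-b|$. Since $b$ lies below the hyperplane $\{x_n=0\}$, we have $|\nabla h|=1$ and $h$ is smooth on $\overline{B^+_3}$. Moreover, $\psi$ is strongly pseudoconvex with respect to $\triangle$ once $s$ is large, because the level sets of $h$ are spheres centered at $b$.

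The basic tool will be the standard estimate for $w\in H^2(B^+_3)$ with $\operatorname{supp} w\subset B^+_3\cup{\bf B}_3$:
\begin{align*}
\tau^{3}s^4\|e^{\tau\psi}\psi^{3/2}w\|_{B^+_3}+\tau s^2\|e^{\tau\psi}\psi^{1/2}\nabla w\|_{B^+_3}\leq C\Big(\|e^{\tau\psi}\triangle w\|_{B^+_3}+\tau^{3/2}s^{3/2}\|e^{\tau\psi}w\|_{{\bf B}_3}+\tau^{1/2}s^{1/2}\|e^{\tau\psi}\nabla w\|_{{\bf B}_3}\Big).
\end{align*}
To prove it, I will conjugate $w=e^{-\tau\psi}f$ and split $e^{\tau\psi}\triangle e^{-\tau\psi}$ into its symmetric part $S=\triangle+\tau^2|\nabla\psi|^2$ and antisymmetric part $A=-2\tau\nabla\psi\cdot\nabla-\tau\triangle\psi$. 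Expanding $\|Sf\|^2+\|Af\|^2+2(Sf,Af)$ and integrating by parts, the commutator $([S,A]f,f)$ gives the positive interior terms, which follow from the H\"ormander-type computation with $\nabla\psi=s\psi\nabla h$ and $\nabla^2\psi=s^2\psi\nabla h\otimes\nabla h+s\psi\nabla^2h$. Every integration by parts also produces integrals over $\{x_n=0\}$. These contain $f$, $\partial_n f$ and tangential derivatives, and I will simply bound them by Cauchy–Schwarz using the boundary norms on the right. Since $\psi$ is bounded above and below on $B^+_3$, the powers of $\psi$ can be absorbed into constants depending on $s$, but I will keep the $s$-powers explicit, since $s$ is fixed.

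Next I will iterate. I apply the estimate to $w=\triangle v$, which bounds $\tau^{3}s^4\|e^{\tau\psi}\triangle v\|$ by $\|e^{\tau\psi}\triangle^2v\|$ plus boundary terms in $\triangle v$ and $\nabla\triangle v$. I then apply it again to $w=v$. Multiplying the second estimate by a suitable power of $\tau s$ and dividing the first appropriately gives
\[
\tau^{3}s^4\|e^{\tau\psi}v\|\lesssim \|e^{\tau\psi}\triangle^2 v\|+\tau^{3/2}s^{2}\|e^{\tau\psi}\triangle v\|_{{\bf B}_3}+\tau^{1/2}s\|e^{\tau\psi}\nabla\triangle v\|_{{\bf B}_3}+\tau^3s^4\|e^{\tau\psi}v\|_{{\bf B}_3}+\tau^2s^3\|e^{\tau\psi}\nabla v\|_{{\bf B}_3}.
\]
The scalings match those in (\ref{carleman-2}): the first application carries a factor $\tau^{-3/2}s^{-2}$ relative to the second, so its boundary weights $\tau^{3/2}s^{3/2}$ and $\tau^{1/2}s^{1/2}$ become weights of order $\tau^{3/2}s^{2}$ and $\tau^{1/2}s$ (or better). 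If the intermediate gains in $\tau$ are insufficient, I will use the gradient terms or choose $\tau$ large relative to $s$ to absorb the lower order pieces.

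Finally, I handle the potential by the triangle inequality. Since $\|W\|_{L^\infty}\leq1$, we have $\|e^{\tau\psi}Wv\|\leq\|e^{\tau\psi}v\|$, which is absorbed into the left side once $\tau^3s^4\geq 2C$. The main obstacle is the boundary bookkeeping in the first step. The boundary integrals include mixed terms such as $\tau\int\partial_n\psi|\nabla' f|^2$ and $\tau^3\int\partial_n\psi|\nabla\psi|^2f^2$, which may have either sign. Because $b_n>0$, $\partial_n h<0$ on the flat boundary, so these terms have a definite sign and can simply be dropped or controlled. The plan is to keep them on the right with the stated $\tau$-powers rather than exploit the sign, since the statement already allows full Dirichlet and Neumann data on ${\bf B}_3$.
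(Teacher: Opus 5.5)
Your strategy is the paper's. The paper takes the second-order boundary Carleman estimate \eqref{carleman} from \cite{Zhu23}, applies it to $\triangle v$ and then to $v$, and absorbs $Wv$ by the triangle inequality. Proving that second-order estimate yourself by the $S$/$A$ commutator method is a legitimate addition. However, the basic estimate you display is false as written, and the step ``bounds $\tau^3s^4\|e^{\tau\psi}\triangle v\|$'' inherits the error.

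The interior weights $\tau^3s^4$ and $\tau s^2$ are the coefficients of the \emph{squared} estimate, i.e.\ in front of $\int\psi^3e^{2\tau\psi}|w|^2$ and $\int\psi e^{2\tau\psi}|\nabla w|^2$. Your boundary weights $\tau^{3/2}s^{3/2}$ and $\tau^{1/2}s^{1/2}$, by contrast, are already square roots. With unsquared norms the interior claim fails. For $w=e^{-\tau\psi}f$ with $f\in C^\infty_0(B^+_3)$ fixed,
\[
e^{\tau\psi}\triangle w=\triangle f-2\tau\nabla\psi\cdot\nabla f+(\tau^2|\nabla\psi|^2-\tau\triangle\psi)f
\]
has $L^2$ norm $O(\tau^2)$, while your left-hand side contains $\tau^3s^4\|\psi^{3/2}f\|$. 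Iterating your display literally would even put $\tau^6s^8\|e^{\tau\psi}v\|$ on the left. What the commutator argument actually gives, after taking square roots, is
\[
\tau^{3/2}s^2\|\psi^{3/2}e^{\tau\psi}w\|+\tau^{1/2}s\|\psi^{1/2}e^{\tau\psi}\nabla w\|\le C\big(\|e^{\tau\psi}\triangle w\|+\tau^{3/2}s^{3/2}\|e^{\tau\psi}w\|_{\mathbf{B}_3}+\tau^{1/2}s^{1/2}\|e^{\tau\psi}\nabla w\|_{\mathbf{B}_3}\big),
\]
which is \eqref{carleman} up to harmless powers of $s$ on the boundary. Your later remark about a relative factor $\tau^{-3/2}s^{-2}$ is in fact using this version.

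With the corrected estimate the bookkeeping closes exactly, and your hedge about insufficient intermediate gains is unnecessary. Multiply the estimate for $w=v$ by $\tau^{3/2}s^2$, and bound $\tau^{3/2}s^2\|e^{\tau\psi}\triangle v\|$ by the estimate for $w=\triangle v$. Passing between $\|e^{\tau\psi}\triangle v\|$ and $\|\psi^{3/2}e^{\tau\psi}\triangle v\|$ costs only an $s$-dependent constant, which the paper also uses tacitly. The resulting boundary weights $\tau^{3/2}s^{3/2}$, $\tau^{1/2}s^{1/2}$, $\tau^3s^{7/2}$, $\tau^2s^{5/2}$ are dominated by those in \eqref{carleman-2}.

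Two smaller corrections. First, your sign remark points the wrong way. $\partial_nh<0$ is the \emph{inward} normal derivative; the outward normal on $\mathbf{B}_3$ is $-e_n$, so $\partial_\nu\psi=s\psi\,b_n/|x-b|>0$ there. Consequently the terms $\tau\int_{\mathbf{B}_3}\partial_\nu\psi|\partial_\nu f|^2$ and $\tau^3\int_{\mathbf{B}_3}\partial_\nu\psi|\nabla\psi|^2f^2$ land on the right-hand side with a positive coefficient and cannot be discarded. This is exactly why the Cauchy data on $\mathbf{B}_3$ must appear in \eqref{carleman-2}, so keeping them is forced, not a choice.

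Second, in \eqref{bi-ci} the potential is not normalized to $\|W\|_{L^\infty}\le 1$. Absorbing $\|e^{\tau\psi}Wv\|$ therefore requires $\tau^3s^4\ge 2C\|W\|_{L^\infty}$, i.e.\ $\tau\ge C(1+\|W\|^{1/3}_{L^\infty})$. This is the threshold the paper imposes, and it is the source of the factor $e^{C(1+\|W\|^{1/3}_{L^\infty})}$ in \eqref{3ball-ish}.
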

\begin{proof}
The following global Carleman estimates was shown in e.g. \cite{Zhu23},

\begin{equation}\label{carleman}
\begin{aligned}
     &\|e^{\tau\psi} \partial_i(a_{ij} \partial_jv) \|_{L^{2}({B}^{+}_{3})}+ \tau^{\frac{3}{2}}s^{2}\|\psi^{\frac{3}{2}}e^{\tau\psi}v\|_{L^{2}({\bf B}_{3})} + 
 \tau^{\frac{1}{2}}s\|\psi^{\frac{1}{2}}e^{\tau\psi}\nabla v\|_{L^{2} ({\bf B}_{3})}
    \\
    &\geq C_{0}\tau^{\frac{3}{2}}s^{2}\|\psi^{\frac{3}{2}}e^{\tau\psi}v\|_{L^{2}({B}^{+}_{3})} + C_{0}\tau^{\frac{1}{2}}s\|\psi^{\frac{1}{2}}e^{\tau\psi}\nabla v\|_{L^{2}({B}^{+}_{3})},
    \end{aligned}
\end{equation}
where $\partial_i(a_{ij} \partial_jv)$ is uniformly elliptic  and $a_{ij}$ is Lipschitz continuous. Note that $s\approx \frac{C}{\delta}$ for some large constant $C$ if we replace the ball $B^+_3$ by $B^+_\delta$ in (\ref{carleman}). Thus, the Carleman estimate (\ref{carleman}) is rescaling invariant. 

%Since we do not need to consider the dependence of $s$ and $\psi$ is bounded above and below, we can write the Carleman estimates (\ref{carleman})  as
%\begin{equation}\label{carleman-1}
%\begin{aligned}
 %    &\|e^{\tau\psi} \partial_i(a_{ij} \partial_jv)\|_{L^{2}({B}^{+}_{3})}+ \tau^{\frac{3}{2}}\|e^{\tau\psi}v\|_{L^{2} ({\bf B}_{3})} + 
 %\tau^{\frac{1}{2}}\|e^{\tau\psi}\nabla v\|_{L^{2}({\bf B}_{3})}
%    \\
%    &\geq C_{s}\tau^{\frac{3}{2}}\|e^{\tau\psi}v\|_{L^{2}({B}^{+}_{3})} + C_{s}\tau^{\frac{1}{2}}\|e^{\tau\psi}\nabla v\|_{L^{2}({B}^{+}_{3})}.
%    \end{aligned}
%\end{equation}
Replacing $v$ by $\triangle_g v$ (we still write $\triangle_g$ as $\triangle$)
in the Carleman estimates (\ref{carleman}) to have
\begin{align}
     &\|e^{\tau\psi} \triangle^2 v \|_{L^{2}({B}^{+}_{3})}+ \tau^{\frac{3}{2}} s^2\|e^{\tau\psi}\triangle v\|_{L^{2} ({\bf B}_{3})} + 
 \tau^{\frac{1}{2}}s\|e^{\tau\psi}\nabla\triangle v\|_{L^{2} ({\bf B}_{3})}
   \nonumber \\
    &\geq C\tau^{\frac{3}{2}}s^2\|e^{\tau\psi}\triangle v\|_{L^{2}({B}^{+}_{3})}.
    \label{carleman-3}
    \end{align}
Applying the Carleman estimates (\ref{carleman}) to the right hand side of (\ref{carleman-3}), we get
   \begin{align}
     &\|e^{\tau\psi} \triangle^2 v \|_{L^{2}({B}^{+}_{3})}+ \tau^{\frac{3}{2}} s^2\|e^{\tau\psi}\triangle v\|_{L^{2} ({\bf B}_{3})} + 
 \tau^{\frac{1}{2}}s\|e^{\tau\psi}\nabla\triangle v\|_{L^{2} ({\bf B}_{3})}
   \nonumber \\
&+\tau^{3}s^4\|e^{\tau\psi}v\|_{L^{2}({\bf B}_{3})} + 
 \tau^{2}s^3\|e^{\tau\psi}\nabla v\|_{L^{2} ({\bf B}_{3})}   \nonumber \\
    &\geq C \tau^{3} s^4\|e^{\tau\psi} v\|_{L^{2}({B}^{+}_{3})}.
    \end{align} 
Taking $\tau>C(1+\|W\|^{\frac{1}{3}}_{L^\infty})$, by triangle inequality, we arrive at \eqref{carleman-2}.

\end{proof}

Note that $s\approx \frac{C}{\delta}$ for some large constant $C$ if we study the Carleman estimates (\ref{carleman-2}) in the half ball  $B^+_\delta$.  Since we consider the rescaled half ball $B^+_3$, we may skip the dependence of $s$. The Cauchy propagation of smallness for bi-Laplace equations is shown as follows.

\begin{lemma} Let $u$ be the solution of (\ref{bi-ci}). There exists $0<\kappa<1$ such that 
\begin{equation}\label{3ball-ish}
    \| u\|_{L^{2}({B}^{+}_{1})} \leq e^{C(1+\|W\|^{\frac{1}{3}}_{L^{\infty}})}\|u\|_{L^{2}({B}^{+}_{2})}^{\kappa} (\sum^3_{j=0}\|\partial^j_{x_n}u\|_{H^{3-j}(\bf{B}_{2})})^{1-\kappa}.
\end{equation}
\label{propa-1}
\end{lemma}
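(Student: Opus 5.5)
We apply the Carleman estimate (\ref{carleman-2}) of Lemma \ref{prop GCE} to a cut-off of $u$, and then optimize in $\tau$. The weight is $\psi=e^{sh}$ with $h(x)=-|x-b|$. It is radially decreasing in $|x-b|$, and since $b_n>0$ is small, its level sets separate $B^+_1$ from the part of $B^+_2$ near the spherical boundary $\{|x|\ \text{close to}\ 2\}$. Concretely, we pick radii $1<\rho_1<\rho_2<2$ so that
\[
\inf_{B^+_1}\psi\ \ge\ \psi_1\ >\ \psi_2\ \ge\ \sup_{B^+_2\setminus B_{\rho_1}}\psi .
\]
We also fix a cut-off $\chi\in C_0^\infty(B_{\rho_2})$ with $\chi=1$ on $B_{\rho_1}$, and set $v=\chi u$. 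Then $\supp v\subset B^+_3\cup{\bf B}_3$, so (\ref{carleman-2}) applies for $\tau>C(1+\|W\|_{L^\infty}^{1/3})$.

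\textbf{Estimating each term.} Since $\triangle^2 u=Wu$, we have
\[
\triangle^2 v-Wv=[\triangle^2,\chi]u,
\]
a third order operator supported in $A_{\rho_1,\rho_2}\cap B^+_2$, where $e^{\tau\psi}\le e^{\tau\psi_2}$. To control the derivatives of $u$ up to order three there by $\|u\|_{L^2(B^+_2)}$, we need a Caccioppoli inequality valid up to the flat boundary. This is the Caccioppoli inequality with boundary terms proved in the Appendix, which gives
\[
\sum_{|\alpha|\le3}\|\nabla^\alpha u\|_{L^2(B^+_{\rho_2}\setminus B_{\rho_1})}\le C(1+\|W\|_{L^\infty})^{3}\Big(\|u\|_{L^2(B^+_2)}+\sum_{j=0}^3\|\partial^j_{x_n}u\|_{H^{3-j}({\bf B}_2)}\Big).
\]
The boundary terms on ${\bf B}_3$ in (\ref{carleman-2}) involve $v$, $\nabla v$, $\triangle v$ and $\nabla\triangle v$ on ${\bf B}_{\rho_2}$. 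Writing $\triangle$ in terms of tangential derivatives and $\partial_{x_n}$, all of these are bounded by
\[
\varepsilon:=\sum_{j=0}^3\|\partial^j_{x_n}u\|_{H^{3-j}({\bf B}_2)}.
\]
Their weights are at most $e^{\tau\max\psi}$, so they contribute at most $C\tau^3e^{C\tau}\varepsilon$. On the left-hand side we restrict to $B^+_1$, where $e^{\tau\psi}\ge e^{\tau\psi_1}$. Altogether this yields
\[
\|u\|_{L^2(B^+_1)}\le C e^{C\tau}\varepsilon+e^{-\tau(\psi_1-\psi_2)}C(1+\|W\|_{L^\infty})^3\big(\|u\|_{L^2(B^+_2)}+\varepsilon\big),
\]
after absorbing the polynomial factors in $\tau$ into the exponentials.

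\textbf{Optimizing in $\tau$.} This is the standard balancing step. If
\[
\varepsilon\ge\|u\|_{L^2(B^+_2)}\,e^{-C_1(1+\|W\|_{L^\infty}^{1/3})},
\]
the inequality holds trivially with a suitable constant, since $\|u\|_{L^2(B^+_1)}\le\|u\|_{L^2(B^+_2)}$. Otherwise we choose
\[
\tau=\frac{1}{C+\psi_1-\psi_2}\,\ln\frac{\|u\|_{L^2(B^+_2)}}{\varepsilon},
\]
which equalizes the two terms and is admissible, i.e. it exceeds $C(1+\|W\|_{L^\infty}^{1/3})$. This choice gives (\ref{3ball-ish}) with $\kappa=C/(C+\psi_1-\psi_2)\in(0,1)$. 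The polynomial factor $(1+\|W\|_{L^\infty})^3$ is absorbed into $e^{C(1+\|W\|_{L^\infty}^{1/3})}$.

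\textbf{Main obstacle.} The delicate point is the geometry of the level sets of $\psi$: we need a strictly positive gap $\psi_1-\psi_2$ between $B^+_1$ and the support of $\nabla\chi$, while still allowing $\chi$ to extend up to the flat boundary. For this we take $b_n$ small and $\rho_1$, $\rho_2$ close to $2$, and note that since $b_n$ is small, $|x-b|$ exceeds $\max_{\overline{B^+_1}}|x-b|$ by a definite amount on $\{|x|\ge\rho_1,\ x_n\ge0\}$. The second difficulty is the Caccioppoli estimate near the flat boundary, where the boundary traces must appear. For that step we rely on the Appendix.
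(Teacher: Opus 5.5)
Your proposal is correct and follows essentially the same route as the paper. The paper cuts off with $\eta=1$ on $B^+_{3/2}$, supported in $B^+_{7/4}$, applies (\ref{carleman-2}), and compares $\min_{B^+_1}h\ge-(1+b_n)$ with $\max h$ on the annulus. It then controls the commutator by the boundary Caccioppoli inequality of Lemma \ref{corollary Cac} and chooses $\tau$ by absorbing the interior term rather than equalizing the two error terms, a cosmetic difference that gives an exponent of the same form, $\kappa=p_1/(p_1+p_0)$. One small correction: Lemma \ref{corollary Cac} controls only $\nabla\triangle u$, $\nabla^2u$ and $\nabla u$, not all derivatives of order $\le 3$, but those are exactly the terms appearing in $[\triangle^2,\chi]u$, so nothing is lost.
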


\begin{proof}
We choose a smooth radial cutoff function $\eta$ such that $\eta(x) = 1$ in ${B}^{+}_{3/2}$, $\eta(x) = 0$ outside of ${B}^{+}_{7/4}$, and $|\nabla^k \eta(x) | \leq C$ for some constant $C$ and $1\leq k\leq 4$.
 Applying the Carleman estimates (\ref{carleman-2}) 
with $v(x)=\eta u $ gives that
\begin{equation}\label{eta carl}
\begin{aligned}
   & \|e^{\tau\psi}( \triangle^2 (\eta u)-W(x)(\eta u))\|_{L^{2}({B}^{+}_{7/4})} + \tau^{\frac{3}{2}}\|e^{\tau\psi}\triangle (\eta u)\|_{L^{2} ({\bf B}_{7/4})} + 
 \tau^{\frac{1}{2}}\|e^{\tau\psi}\nabla\triangle (\eta u)\|_{L^{2} ({\bf B}_{7/4})}
   \nonumber \\
&+\tau^{3}\|e^{\tau\psi}(\eta u)\|_{L^{2}({\bf B}_{7/4})} + 
 \tau^{2}\|e^{\tau\psi}\nabla (\eta u)\|_{L^{2} ({\bf B}_{7/4})}   \nonumber \\
    &\geq C_{s}\tau^{3}\|e^{\tau\psi} (\eta u)\|_{L^{2}({B}^{+}_{7/4})}.
    \end{aligned}
\end{equation}
Since $\psi<1$, from \eqref{bi-ci}, we obtain that
\begin{equation*}
\begin{aligned}
    \|e^{\tau\psi}[\triangle^2, \eta]u\|_{L^{2}({B}^{+}_{7/4})} + 
     e^{\tau}\sum^3_{j=0}\|\partial^j_{x_n}u\|_{H^{3-j}(\bf{B}_{7/4})}
    \geq C\tau^3\|e^{\tau\psi} u\|_{L^{2}({B}^{+}_{3/2})}.
    \end{aligned}
\end{equation*}
Note that $[\triangle^2, \ \eta]$ is a three-order differential operator involving the derivatives of $\eta$. From the properties of $\eta$, we get
\begin{equation}\label{bounding-5}
\begin{aligned}
    \|e^{\tau\psi}u\|_{L^{2}({B}^{+}_{7/4} \setminus {B}^{+}_{3/2})}& + \|e^{\tau\psi}\nabla \triangle u\|_{L^{2}({B}^{+}_{7/4}\setminus {B}^{+}_{3/2})} + \|e^{\tau\psi}\nabla^2 u\|_{L^{2}({B}^{+}_{7/4}\setminus {B}^{+}_{3/2})} 
     \\&  + \|e^{\tau\psi}\nabla  u\|_{L^{2}({B}^{+}_{7/4}\setminus {B}^{+}_{3/2})} +e^{\tau}\sum^3_{j=0}\|\partial^j_{x_n}u\|_{H^{3-j}(\bf{B}_{7/4})}\\
   & \geq C\tau^3\|e^{\tau\psi} u\|_{L^{2}({B}^{+}_{3/2})}.
    \end{aligned}
\end{equation}
We aim to find the maximum and minimum of $\psi$ on the left hand side and the right hand side of \eqref{bounding-5}. By the definition of $h(x)$ and choosing $b_n$ to be appropriately small, we see that $\max_{{B}^{+}_{7/4} \setminus {B}^{+}_{3/2} }h(x) = -\sqrt{ (\frac{3}{2})^2+b^2_{n}}\leq -(\frac{4}{3}+b_n))$  and $\min_{{B}^{+}_{1}}h(x) = -(1 + b_{n})$.
Plugging  these bounds in (\ref{bounding-5}), we arrive at
\begin{equation}\label{bounding 6}
\begin{aligned}
    e^{\tau e^{-s(\frac 4 {3}+b_{n})}}\big(\|u\|_{L^{2}({B}^{+}_{ 7/4})} &+\|\nabla u\|_{L^{2}({B}^{+}_{ 7/4})} +  \|\nabla^2 u\|_{L^{2}({B}^{+}_{ 7/4})} + \|\nabla \triangle u\|_{L^{2}({B}^{+}_{ 7/4})}\big)
    \nonumber \\
&+e^{\tau}\sum^3_{j=0}\|\partial^j_{x_n}u\|_{H^{3-j}(\bf{B}_{ 7/4 })}
    \geq C\tau^3e^{\tau e^{-s(1 + b_{n})}}\| u\|_{L^{2}({B}^{+}_{1})}. 
    \end{aligned}
\end{equation}
Applying the Caccioppoli inequality with boundary terms (\ref{caccioppoli no bdry}) in Lemma \ref{corollary Cac} in the Appendix yields that
\begin{equation}\label{bounding 7}
\begin{aligned}
    e^{\tau e^{-s(\frac 4 {3}+b_{n})}}\|u\|_{L^{2}({B}^{+}_{2})} + e^{\tau}\sum^3_{j=0}\|\partial^j_{x_n}u\|_{H^{3-j}(\bf{B}_{2})}
    \geq Ce^{\tau e^{-s(1 + b_{n})}}\| u\|_{L^{2}({B}^{+}_{1})}. 
    \end{aligned}
\end{equation}
Let
$$
A_{1} = \|u\|_{L^{2}({B}^{+}_{2})},\quad
A_{2} = \sum^3_{j=0}\|\partial^j_{x_n}u\|_{H^{3-j}(\bf{B}_{2})},\quad 
A_{3} = \| u\|_{L^{2}({B}^{+}_{1})}.
$$
Set $p_{0} =  {  e^{-s(1+b_{n})}- e^{-s(\frac 4 {3}+b_{n})}}$ and $p_{1} = 1 - e^{-s(1+ b_{n})}$. Note that $p_0>0$ and $p_1>0$. We  get
\begin{equation}\label{B ineq}
    e^{-\tau p_{0}}A_{1} + e^{\tau p_{1}}A_{2} \geq CA_{3}.
\end{equation}
We aim to incorporate the $A_{1}$ term into the right hand side of (\ref{B ineq}). Thus, we choose
$$
e^{-\tau p_{0}}A_{1} \leq \frac{1}{2}CA_{3}.
$$
Hence the following assumption of $\tau$ is needed
\begin{align}\label{english}
\tau \geq -\frac{1}{p_0}\ln (\frac{CA_{3}}{2A_{1}}).
\end{align}
Thanks to \eqref{B ineq}, for such $\tau$, we have that 
\begin{equation}\label{B2 B3 ineq}
    e^{\tau p_1}A_{2} \geq \frac{1}{2}CA_{3}.
\end{equation}
Note that $\tau > C(1 +\|W\|^{\frac{1}{3}}_{L^\infty})$ is chosen in the Carleman estimates (\ref{carleman-2}) and the assumption  in (\ref{english}). We select
\begin{equation*}
    \tau = C(1+\|W\|^{\frac{1}{3}}_{L^{\infty}} ) - \frac{1}{p_0}\ln (\frac{CA_{3}}{2A_{1}}).
\end{equation*}
Substituting this choice of $\tau$ into \eqref{B2 B3 ineq} leads that
\begin{equation*}
    e^{C(1+\|W\|^{\frac{1}{3}}_{L^{\infty}})}A_{1}^{\frac{p_1}{p_1 + p_0}} A_{2}^{\frac{p_0}{p_1 + p_0}} \geq CA_{3}.
\end{equation*}
Let $\kappa = \frac{p_1}{p_1 +p_0}$.  We have $0<\kappa<1$. By the definition of $A_1, A_2$ and  $A_3$,  the last inequality implies that
\begin{equation}
    \| u\|_{L^{2}({B}^{+}_{1})} \leq e^{C(1+\|W\|^{\frac{1}{3}}_{L^{\infty}})}\|u\|_{L^{2}({B}^{+}_{2})}^{\kappa} (\sum^3_{j=0}\|\partial^j_{x_n}u\|_{H^{3-j}(\bf{B}_{2})})^{1-\kappa}.
\end{equation}
  Therefore, the lemma is completed.
\end{proof}

Given a cube $Q$, we introduce the maximal doubling index $N(Q)$,
$$ N(Q)=\sup_{x\in Q, \ r\in(0, diam(Q))} N(x, r). $$
The maximal doubling index $N(Q)$ of the cube  is more convenient in applications. If a cube $q\subset Q$, it holds that $N(q)\leq N(Q)$.
If a cube $q\subset \cup_i Q_i$ with $\diam(Q_i)\geq \diam q$, it holds that $N(Q_i)\geq N(q)$ for some $Q_i$.

\begin{lemma}
Let $Q$ be a cube $[-R, \ R]^n$ in $\mathbb R^n$. Divide $Q$ into $(2A+1)^n$ equal subcubes $q_i$ with side length $\frac{2R}{2A+1}$. Let $\{q_{i, 0}\}$ be the subcubes with nonempty intersection with the hyperplane $\{x_n=0\}$ and $N(Q)\leq  N$. If $A>A_0$ and $N>N_0$ for some $A_0$ and $N_0$, then there exists at least a subcube $q_{i, 0}$ such that $N(q_{i, 0})\leq \frac{N}{2} $.
\label{lemdou}
\end{lemma}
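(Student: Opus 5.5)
This is the hyperplane lemma of Logunov. I will argue by contradiction, combining the Cauchy propagation of smallness of Lemma \ref{propa-1} with a lower bound on $u$ coming from the doubling index. Suppose every subcube $q_{i,0}$ meeting $\{x_n=0\}$ has $N(q_{i,0})>N/2$. Normalize so that $\|u\|_{L^\infty(Q)}=1$ (more precisely, on a fixed enlargement of $Q$, whose sup is comparable up to $2^{CN}$ because $N(Q)\le N$).

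\emph{Step 1: smallness on the hyperplane.} Take a boundary subcube $q=q_{i,0}$, of side $\sim R/A$. Large doubling index at some point $x\in q$ and some scale $r<\diam q$, combined with the almost monotonicity of Lemma \ref{baa} (taking $\delta$ small and $N/2\ge N_0$), propagates from scale $r$ up to scale $\sim R$. This gives
$$\|u\|_{L^\infty(c\, q)}\le \Big(\frac{C}{A}\Big)^{(1-\delta)N/2}\|u\|_{L^\infty(B_{cR}(x))}\le e^{-c_1 N\log A}$$
for a slightly enlarged neighborhood of $q$. Here $N(Q)\le N$ is used to control the larger ball by $\|u\|_{L^\infty(Q)}$, at a cost $2^{CN}$ independent of $A$. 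Interior elliptic estimates for $\triangle^2 u=Wu$ (the rescaled equation, $\|W\|\le 1$), applied on balls of radius $\sim R/A$, convert this sup bound into bounds on $\sum_{j\le 3}\|\partial^j_{x_n}u\|_{H^{3-j}}$ on the strip near $\{x_n=0\}$. The losses are only polynomial in $A$, so after covering all boundary subcubes the Cauchy data on ${\bf B}_{cR}$ are bounded by $A^{C}e^{-c_1N\log A}$.

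\emph{Step 2: propagation.} Rescale $B^+_{cR}$ to $B^+_3$. The Carleman constant stays bounded because the rescaled potential has norm at most $1$, and the estimate is rescaling invariant. Lemma \ref{propa-1} (applied on both sides of the hyperplane) then gives
$$\|u\|_{L^2(B_{cR/3})}\le C\,\|u\|_{L^2(B_{2cR/3})}^{\kappa}\,\big(A^{C}e^{-c_1N\log A}\big)^{1-\kappa}\le 2^{CN}e^{-c_1(1-\kappa)N\log A}.$$

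\emph{Step 3: contradiction.} Since $N(Q)\le N$, iterating the doubling from a point near the origin gives the lower bound $\|u\|_{L^2(B_{cR/3})}\ge 2^{-C'N}$, relative to the normalization of Step 1. Choosing $A_0$ so large that $c_1(1-\kappa)\log A_0> (C+C')\ln 2$ makes Step 2 incompatible with this lower bound once $N>N_0$. Hence some $q_{i,0}$ satisfies $N(q_{i,0})\le N/2$.

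\emph{Main obstacle.} The hardest part is Step 1. The doubling index of a boundary cube is a supremum over points and radii, so a large value at a small radius has to be pushed up to the macroscopic scale using Lemma \ref{baa}. Lemma \ref{baa} is only valid when $tR\le\bar R(\delta,g)$, and the constants must not spoil the $\log A$ gain. I would handle this by working in the normalized setting $Q\subset B_{\bar R}$, invoking \eqref{mono-del-1} with $t\sim A$, and carefully tracking that all other losses are of order $2^{CN}$ with $C$ independent of $A$. Passing from $L^\infty$ bounds to the $H^{3-j}$ norms of the Cauchy data via Caccioppoli's inequality \eqref{hihcac} and Sobolev estimates is routine.
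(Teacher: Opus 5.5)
Your proposal is correct and follows the paper's proof. The common steps are: argue by contradiction, use almost monotonicity to push $N(x_i,r_i)\ge N/2$ up to macroscopic scale and get $e^{-cN\log A}$ smallness near $\{x_n=0\}$, use elliptic estimates with only polynomial losses in $A$ to make the Cauchy data small, and then apply Lemma \ref{propa-1}. The only cosmetic difference is the final contradiction: the paper picks $B_{1/64}(p)\subset B^+_{1/16}$ and uses (\ref{mono-del}) to force $N(p,5/16)\ge 2N$, while you iterate $N(0,r)\le N$ directly to get a $2^{-C'N}$ lower bound near the origin, which is equivalent and slightly more elementary.
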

\begin{proof}We prove it by contradiction. 
For each $q_{i, 0}$, assume that there exist some point $x_i \in q_{i, 0}$ and $r_i<\diam(q_{i, 0})$ such that $N(x_i, r_i)\geq N/2$ for $N\geq N_0$.
By scaling, we may assume that $R=\frac{1}{2}$. 
Let $\|u\|_{L^\infty(B_{1/4})}=M_0$.  We have $$\|u\|_{L^\infty(B_{1/8}(x_i))}\leq M_0$$ if $x_i\in B_{1/8}$ since
$  B_{1/8}(x_i)\subset B_{1/4}$. 
From the assumption $N(x_i, r_i)\geq N/2$ and the almost monotonicity of the doubling index in (\ref{mono-del}).  We have
\begin{align}
  (\frac{2A+1}{64\sqrt{n}})^{N/3}\leq  (\frac{2A+1}{64\sqrt{n}})^{N(x_i, \frac{8\sqrt{n}}{2A+1})}\leq \frac{\|u\|_{L^\infty(B_{1/8}(x_i) )}}{ \|u\|_{L^\infty(B_{\frac{8\sqrt{n}}{2A+1}}(x_i) )}}.
\end{align}
As $x_i\in q_{i,0}$, we further get
\begin{align}
\|u\|_{L^\infty(4 q_{i, 0} )}&\leq \|u\|_{L^\infty(B_{\frac{8\sqrt{n}}{2A+1}}(x_i) )}\leq  \|u\|_{L^\infty(B_{1/8}(x_i) )}(\frac{64\sqrt{n}}{2A+1})^{\frac{N}{3}} \nonumber \\
&\leq 2^{-CN\log A} M_0,
\label{useback}
\end{align}
where  $A\geq A_0$ for some large $A_0$. By elliptic estimates, we have
\begin{align}
  \|u\|_{W^{3,\infty }(2q_{i,0})}  \leq  C(2A+1)^{\frac{6+n}{2} } 
  \|u\|_{L^2(4q_{i,0})}  \leq   C(2A+1)^3    \|u\|_{L^\infty(4q_{i,0})}.
\end{align}

Let $\tilde{\Gamma}=B_{1/8}\cap \{x_n=0\}$. 
%Summing up all the cubes $q_{i,0}$ with intersection with $\tilde{\Gamma}$, 
The last inequality yields that
\begin{align}
 \sum^3_{j=0}\|\partial^j_{x_n}u\|_{W^{3-j, \infty}(\tilde{\Gamma})}  \leq C (2A+1)^{3}\sup_{q_{i,0}\subset \tilde{\Gamma}}\|u\|_{L^\infty(4 q_{i,0})}  
\leq e^{-CN\log A}M_0,
\end{align}
where we used (\ref{useback}) in the second inequality. Note that $\|u\|_{L^2(B^+_{1/4})}\leq CM_0$. By scaling and using the result of the propagation of smallness in (\ref{3ball-ish}) in Lemma \ref{propa-1}, we have
\begin{align}
\|u\|_{L^2( B^+_{1/16})}\leq  e^{-CN\log A} M_0.
\end{align}
We select a ball $ B_{1/64}(p)\subset B^+_{1/16}$.  Hence, by elliptic estimates,
\begin{align}
\|u\|_{L^\infty(  B_{1/64}(p))}\leq e^{-CN\log A} M_0.
\end{align}
Since $B_{1/4}\subset B_{5/16}(p)$, it holds that $\|u\|_{L^\infty ( B_{5/16}(p))}\geq M_0$. Thus, we show that
\begin{align}
\frac{ \|u\|_{L^\infty ( B_{5/16}(p))}} {\|u\|_{L^\infty(  B_{1/64}(p))}}\geq e^{CN \log A}.
\label{one-1}
\end{align}
The almost monotonicity result (\ref{mono-del}) implies that
\begin{align}
\frac{ \|u\|_{L^\infty (B_{5/16}(p))}} {\|u\|_{L^\infty( B_{1/64}(p))}  }\leq  (20)^{2\tilde{N}},
\label{two-1}
\end{align}
where $\tilde{N}$ is the doubling index in $ B_{{5}/{16}}(p)$. Therefore, it follows from (\ref{one-1}) and (\ref{two-1}) that
\begin{align}
\tilde{N}\geq 2N\geq 2 N(Q)
\end{align}
if $A$ is chosen to be large enough. Hence we arrive at a contradiction, which finishes the proof of the Lemma.
\end{proof}

Following the arguments in \cite{Lo18} and the almost monotonicity results for the doubling index in Lemma \ref{baa}, hyperplane Lemma \ref{lemdou}, and simplex Lemma \ref{simplex}, the following conclusion holds.
\begin{lemma}
If $Q$ is partitioned into equal sub-cubes $A^n$, where $A$ depends on $n$, then the number of sub-cubes with doubling index greater than
$\max\{ \frac{N(Q)}{1+c}, \  N_0\}$ is less than $\frac{1}{2}A^{n-1}$ for some $c$ dependent on $n$ and some fixed constant $N_0$.
\label{lem12}
\end{lemma}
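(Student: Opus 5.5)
We follow the combinatorial scheme of \cite{Lo18} and argue by contradiction. Suppose that, for a partition of $Q$ into $A^n$ equal subcubes $q$ (with $A$ large, to be fixed in terms of $n$), at least $\frac{1}{2}A^{n-1}$ subcubes satisfy $N(q)>\max\{\frac{N(Q)}{1+c},N_0\}$; call them \emph{bad}. Here $c$ is half the constant of the simplex Lemma \ref{simplex}, with $\gamma=\gamma(n)$ fixed.

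The first step is a purely Euclidean dichotomy. Among $\frac12 A^{n-1}$ cubes in an $A\times\cdots\times A$ grid, either they essentially lie near a single hyperplane, or there are $n+1$ bad cubes whose centers $x_1,\dots,x_{n+1}$ span a simplex $S$ with $w(S)>\gamma$. To make this precise, I would use the standard fact that if no such simplex exists with $\diam(S)\sim$ side of $Q$, then all bad cubes lie in a $C\gamma$-neighborhood (relative to $\diam Q$) of some hyperplane. Repeating this at smaller scales, by pigeonholing onto a sub-block of $Q$ of side $\sim A^{-1/2}\diam Q$, one can always find either a well-shaped simplex or a strip configuration. Since the number of bad cubes is at least $\frac12A^{n-1}$ while a $K/A$-neighborhood of a hyperplane contains about $KA^{n-1}$ cubes, for $A$ large compared to $K(\gamma,n)$ this counting ensures at least one alternative occurs at a usable scale.

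In the simplex case, each vertex cube $q_i$ contains a point $y_i$ and a radius $r_i<\diam q_i$ with $N(y_i,r_i)>\frac{N(Q)}{1+c}$. Using the almost monotonicity (\ref{mono-del-1}) of Lemma \ref{baa} with small $\delta$, we pass to balls centered at $x_i$ with radii comparable to $\frac{K}{2}\diam(S)$ and keep $N(B_i)>(1-\delta)\frac{N(Q)}{1+c}$. This requires $N_0\ge C\delta^{-1}\log\delta^{-1}$ and $Q\subset B_{\bar R}$, which is arranged by the rescaling $u(x/M^{1/4})$ and the choice of $r(\gamma,g)$. Lemma \ref{simplex} then gives $N(x_0,C\diam S)>(1+2c)(1-\delta)\frac{N(Q)}{1+c}>N(Q)$. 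Since $C\diam S\lesssim\diam Q$ after once more adjusting the radius by (\ref{mono-del-1}), this contradicts the definition of the maximal doubling index $N(Q)$.

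In the hyperplane case, the bad cubes fill most of a thin strip around a hyperplane $\Pi$. After rotating $\Pi$ to $\{x_n=0\}$, we consider a subcube $Q'$ adapted to the strip, divided into $(2A'+1)^n$ cubes, where every cube meeting $\Pi$ is bad or is adjacent to bad cubes. We then apply the hyperplane Lemma \ref{lemdou} to $Q'$ with $N=N(Q)$. It produces a subcube meeting $\Pi$ with doubling index $\le N(Q)/2<N(Q)/(1+c)$. Monotonicity under inclusion and covering of $N(\cdot)$ transfers this to the bad cubes, which gives the contradiction.

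The main obstacle is the geometric-combinatorial dichotomy together with the matching of scales. The simplex lemma needs vertices at comparable scale with radii below $K\diam(S)/2$, and Lemma \ref{lemdou} needs essentially all cubes along a hyperplane to be bad. The losses from $\delta$ in Lemma \ref{baa} must stay below the gain $c$. I would try first to copy Logunov's counting verbatim, with $c\ll$ the simplex constant and $\delta\ll c$.
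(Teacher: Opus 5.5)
Your proposal has a genuine gap. The paper gives no proof of this lemma beyond deferring to Logunov's combinatorics with Lemmas \ref{baa}, \ref{simplex} and \ref{lemdou}. Your outline uses the same ingredients, but the hyperplane alternative, which carries the whole lemma, does not work as you describe it.

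\textbf{The hyperplane case.} Lemma \ref{lemdou} is all-or-nothing. It only forbids a cube $Q'\subset Q$ in which \emph{every} one of the $(2A'+1)^{n-1}$ subcubes meeting the hyperplane has doubling index $>N(Q)/2$. A configuration missing one cell in each such block is compatible with it, and still has density about $1-(2A'+1)^{1-n}$ along $\Pi$. Your counting does not even give high density; it runs the wrong way:
\begin{itemize}
\item A strip of width $K\diam(Q)/A$ contains about $KA^{n-1}$ cubes. So $\frac12A^{n-1}$ bad cubes occupy only about a $1/(2K)$ fraction of it, not ``most'' of it.
\item Pigeonholing onto a sub-block of side $A^{-1/2}\diam Q$ makes this worse. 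It only guarantees $\frac12A^{n/2-1}$ bad cubes there, which is a fraction $\frac12A^{-1/2}$ of one layer of that sub-block.
\item Nothing in your argument produces a cube $Q'$ ``where every cube meeting $\Pi$ is bad or adjacent to bad cubes''.
\item ``Adjacent to a bad cube'' gives no lower bound on $N(q_{i,0})$ by inclusion or covering. You would have to spread a large doubling index to nearby centers at a larger radius, via Lemma \ref{baa}.
\end{itemize}

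\textbf{What is missing} is a genuinely multiscale count. One way to organize it uses the following observation. If $N(q)>\max\{N(Q)/(1+c),N_0\}$, then every intermediate grid cube containing $q$ satisfies the same inequality, and $q$ is bad relative to that cube as well. So the number of bad cubes is submultiplicative under refinement of the partition. It therefore suffices to show that at one fixed scale there are at most $(1-\epsilon)$ times one full layer of bad cubes. That in turn needs two things:
\begin{itemize}
\item The simplex lemma applied in every sub-block of diameter $\le\diam Q/C$. This gives slabs of relative width $\lesssim\gamma$, which vary from block to block and are tilted with respect to the grid.
\item Lemma \ref{lemdou} applied to coarse-grained cells along each slab. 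Here $\gamma$ must be chosen small relative to $1/A'$ before $K$, $C$, $c$ are fixed, so that each slab sits inside one layer of the adapted cube.
\end{itemize}

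\textbf{The simplex case} has a related error. The simplex must satisfy $2\diam q/K\le\diam S\le\diam Q/C$. Since $C\ge K\ge 2/\gamma$, a simplex with $\diam S\sim\diam Q$ gives no contradiction with the definition of $N(Q)$. The radius also cannot be ``adjusted'' downward by (\ref{mono-del-1}). That inequality only yields $(1+\delta)N(x,tR)\ge(1-\delta)N(x,R)$, so the doubling index essentially grows with the radius, and a large index at radius $C\diam S>\diam Q$ says nothing about $N(Q)$.

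As it stands, your proposal defers exactly the combinatorial core (``copy Logunov's counting'') and replaces it with a heuristic that does not hold.
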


Thanks to Lemma \ref{lem12} and the implicit finite upper bound results of nodal sets for bi-Laplace in \cite{Zhu19}, 
 we give the bounds of the nodal set $\{x\in Q| u=0\}$ for solutions of bi-Laplace equation (\ref{bi-Laplace}) in a small cube by iterating arguments.
 
\begin{proposition}
Let $N(Q)$ be the maximal doubling index of the cube $Q$ for the solutions $u$ in (\ref{bi-Laplace}). There exist positive constant $r$, $C$ and ${\alpha_0}$ such that for any solutions $u$ on $\mathcal{M}$ and $Q\subset  B_r$,
\begin{equation}
H^{n-1}(\{u=0\}\cap Q)\leq C diam^{n-1}(Q)N^{{\alpha_0}}(Q),
\label{prove}
\end{equation}
where ${{\alpha}_0}$ depends only on $n$.
\label{pro2}
\end{proposition}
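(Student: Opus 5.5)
We follow Logunov's iteration scheme from \cite{Lo18}, using Lemma \ref{lem12} as the combinatorial input and the qualitative bound of \cite{Zhu19} as the base case. Define
$$F(N)=\sup\Big\{\frac{H^{n-1}(\{u=0\}\cap Q)}{\diam^{n-1}(Q)}\ :\ Q\subset B_r,\ N(Q)\leq N\Big\},$$
where the supremum runs over all solutions $u$ of (\ref{bi-Laplace}) with $\|W\|_{L^\infty}\le 1$. Since the class of equations is invariant under translations and under dilations $x\mapsto \rho x$ with $\rho\le 1$ (the potential only shrinks, to $\rho^4 W$), $F$ is scale invariant. The goal is to show $F(N)\le C N^{\alpha_0}$ for $N\geq N_0$.

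\emph{Base case.} For $N\le N_0$ (with $N_0$ the constant from Lemma \ref{lem12}, enlarged if needed), we want $F(N)\le C(N_0)$. This is exactly the implicit finite upper bound of \cite{Zhu19}: a uniform bound on the doubling index gives a uniform bound on the normalized nodal measure. The argument is a compactness one. Bounded doubling together with the elliptic estimates gives $C^{3,\alpha}$ compactness of the normalized solutions, and the nodal measure is then controlled uniformly. We take this as given.

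\emph{Recursive step.} Let $N(Q)\le N$ with $N>N_0$, and partition $Q$ into $A^n$ equal subcubes $q$, with $A=A(n)$ from Lemma \ref{lem12}. That lemma says that fewer than $\frac12A^{n-1}$ subcubes are ``bad'', meaning $N(q)>\max\{N/(1+c),N_0\}$. Every subcube has $N(q)\le N(Q)\le N$, and each good subcube has $N(q)\le N/(1+c)$ once $N/(1+c)\ge N_0$. Since $\diam q=\diam Q/A$, additivity of $H^{n-1}$ over the cubes (their boundaries carry no extra nodal measure, being counted in at most two cubes) gives
$$H^{n-1}(\{u=0\}\cap Q)\le \frac{\diam^{n-1}Q}{A^{n-1}}\Big(\tfrac12A^{n-1}F(N)+A^nF\big(\tfrac{N}{1+c}\big)\Big),$$
that is,
$$F(N)\le \tfrac12F(N)+A\,F\big(N/(1+c)\big).$$
A priori $F(N)$ might be infinite. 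To handle this, we first restrict the supremum to cubes of size at least $\rho>0$ and close the induction with a bound independent of $\rho$; alternatively, finiteness of $F(N)$ for each fixed $N$ again follows from \cite{Zhu19}. Absorbing the first term then yields $F(N)\le 2A\,F(N/(1+c))$.

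\emph{Iteration.} Repeating the step $k\approx\log_{1+c}(N/N_0)$ times brings the doubling index down to $N_0$, so
$$F(N)\le (2A)^kF(N_0)\le C\,N^{\log(2A)/\log(1+c)}.$$
This gives $\alpha_0=\log(2A)/\log(1+c)$, which depends only on $n$ since both $A$ and $c$ do. The main obstacle is ensuring that every cube appearing in the iteration stays inside $B_r$ at scales where Lemma \ref{baa} and Lemma \ref{lem12} apply. In particular, $\bar R(\delta,g)$ and the simplex-lemma radius must fix $r$ independently of $N$, and the subcubes (together with their enlargements used in the doubling index) must not leave the region where the almost monotonicity holds. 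We would fix $r$ at the outset as the minimum of these radii divided by a dimensional constant, so that every subcube of $Q\subset B_r$ automatically qualifies.
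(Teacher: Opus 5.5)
Your proposal is correct and follows the paper's proof essentially step for step. It uses the same function $F(N)$, the same splitting via Lemma \ref{lem12} into fewer than $\frac12A^{n-1}$ high-index subcubes giving $F(N)\le 2A\,F(N/(1+c))$, the same iteration to $\alpha_0=\log_{1+c}(2A)$, and the same appeal to \cite{Zhu19} for both finiteness of $F(N)$ and the base case $N\le N_0$. Of your two options for finiteness, only the appeal to \cite{Zhu19} works as stated: restricting to cubes of side at least $\rho$ does not survive passing to subcubes, so that version of the induction does not close.
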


\begin{proof}
 For any solutions $u$ in (\ref{bi-Laplace}),  we consider those solutions such that $N(Q)\leq N$.
Define the function
\begin{equation} F(N)=\sup_{N_{u}(Q)\leq N}\frac{ H^{n-1}(\{u=0\}\cap Q)}{ \diam^{n-1}(Q)}.
\label{defnew} \end{equation}
 By Theorem 4  in \cite{Zhu19}, it holds that $F(N)<\infty$ for any given $N$.
We aim to show that
$$  F(N)\leq C N^{{\alpha_0}} $$
for some ${\alpha_0}$ depending only on $n$,  which provides the proof of the Proposition. 
%As shown in \cite{Han00} for higher order elliptic equations, the Hausdorff dimension of the  sets $\{ u=0 \}$ is not greater than $n-1$. 
%The Hausdorff dimension of nodal sets  $\{ u=0 \}$ is no more than $n-1$. Such stratification can also be observed in Lemma \ref{haudim} in Section 6.
% We assume that $u$ and $v$ has the same co-dimension one zero sets in $Q$. Otherwise, $H^{n-1}(\{u=0\}\cap Q)=0$, then the proposition follows immediately. If there exist $x_0$ such that $u(x_0)=0$ in $Q$, then $N_{u}(Q)\geq 1$. 
% Thanks to Theorem 4 in \cite{Zhu19}, we have shown  that $F(N)< \infty$ for $N\leq N_0$. 
 %We first claim that if
%\begin{equation} F(N)> 3AF(\frac{N}{1+c}),
%\label{claim}
%\end{equation}
%then there holds $N\leq N_0$, where the constant $A$, $c$ are those in the last lemma and $N_0$ depends on the manifold $\mathcal{M}$. If  $F(N)$ is almost attained in (\ref{defnew}), then
%\begin{equation}
%\frac{ H^{n-1}(\{u=0\}\cap Q)}{ \diam^{n-1}(Q)}>\frac{5}{6} F(N),
%\label{contra}
%\end{equation}
%where  $N_{u}(Q)\leq N$. 
We divide $Q$ into $A^n$ equal subcubes $q_i$, $i=1, 2, \cdots, A^n$, then split $q_i$ into two groups
$$G_1= \{ q_i|\frac{N}{1+c}\leq N(q_i)\leq N\}   $$
and
$$ G_2=\{ q_i|N(q_i)<\frac{N}{1+c} \}.   $$
  Hence it holds that
\begin{align}
H^{n-1}(\{u=0\}\cap Q)&\leq \sum_{q_i\in G_1} H^{n-1}(\{u=0\}\cap q_i)+\sum_{q_i\in G_2} H^{n-1}(\{u=0\}\cap q_i) \nonumber \\
&\leq |G_1|F(N) \frac{ \diam^{n-1}(Q)}{A^{n-1}}+|G_2| F(\frac{N}{1+c})  \frac{ \diam^{n-1}(Q)}{A^{n-1}} \nonumber \\
&=I_1+I_2,
\end{align}
where $|G_i|$ denotes the number of subcubes in $G_i$. With aid of  Lemma \ref{lem12}, the number of subcubes in  $G_1$ is less than $\frac{1}{2}A^{n-1}$ if $N>N_0$. Then we have
\begin{equation}
I_1\leq \frac{1}{2} F(N) \diam^{n-1}(Q).
\label{II1}
\end{equation}
%If (\ref{claim}) holds, it follows that
%\begin{equation}
%I_2\leq |G_2|\frac{F(N)}{3A} \frac{ \diam^{n-1}(Q)}{A^{n-1}}.
%\end{equation}
Thus, we obtain
\begin{equation}
F(N)\leq 2 |G_2|   F(\frac{N}{1+c})/{A^{n-1}}.
\end{equation}
Since $|G_2|\leq A^n$, then it holds that
%The combination of the estimates  (\ref{II1}) and (\ref{II2}) gives that
%\begin{equation}
%I_1+I_2\leq \frac{5}{6} F(N)\diam^{n-1}(Q),
%\end{equation}
%which contradicts with  (\ref{contra}). Therefore,  the claim is verified. 
%That is, if  $N\geq N_0$, we have
\begin{equation} F(N)\leq 2AF(\frac{N}{1+c})
\label{result}
\end{equation}
for $N\geq N_0$.
Let $\frac{N}{(1+c)^k}=N_0$. We iterate the estimate (\ref{result}) $k$ times to get
\begin{align*}
F(N)&\leq (2A)^k F(\frac{N}{(1+c)^k}) \\
&=(1+c)^{(\log_{1+c} {2A}) (\log_{1+c} \frac{N}{N_0})} F(N_0)\\
&=(\frac{N}{N_0})^{(\log_{1+c} 2A)}F(N_0).
\end{align*}
Thus, the estimate (\ref{prove}) is obtained for $N\geq N_0$. If $N\leq N_0$, it follows from Theorem 4  in \cite{Zhu19} that
$F(N)\leq C(N_0)
$
for some $C$ that depends on $N_0$. Therefore,  the proposition is completed.

\end{proof}

Proposition \ref{pro2} leads to the polynomial upper bounds of nodal sets for solutions of  bi-Laplace equations in (\ref{bi-Laplace}).
\begin{proof}[ Proof of Theorem \ref{th1}]
We have shown  the following  doubling inequality in Theorem 3 in \cite{Zhu19},
\begin{equation}
\|u\|_{L^\infty( B_{2r}(x))}\leq e^{ CM^{\frac{1}{3}}}\|u\|_{L^\infty( B_{r}(x))}
\label{haha-my}
\end{equation}
for any $x\in \mathcal{M}$ and any $0<r<r_0$, where $r_0$ depends only on the manifold $\mathcal{M}$.
From the definition of doubling index in (\ref{defined}), it follows from (\ref{haha-my}) that
$$N(x, r)\leq CM^{\frac{1}{3}}$$
for $M$  large and for any $x\in \mathcal{M}$ and $0<r<r_0$. Thus, the doubling index $N(Q)\leq CM^{\frac{1}{3}}$ in any cube $Q$. For bi-Laplace equations (\ref{bi-Laplace}) in the cube $Q\subset B_r$ with $0<r\leq \frac{r_0}{{M}^{1/4}}$, the last proposition implies that
$$ H^{n-1}(\{u=0\}\cap Q)\leq C  M^{\frac{{\alpha_0}}{3}-\frac{n-1}{4}}.  $$
Since the manifold $\mathcal{M}$ is compact, we can cover the manifold with $CM^\frac{n}{4}$ number of balls with radius $r=\frac{r_0}{{M}^{1/4}}$. Therefore, we arrive at
$$ H^{n-1}(x\in \mathcal{M}|\{u=0\})\leq C  M^{\frac{{\alpha_0}}{3}+\frac{1}{4}}.  $$
This gives the proof of Theorem \ref{th1} by letting $\beta={\frac{{\alpha_0}}{3}+\frac{1}{4}}$.
\end{proof}

\section{Appendix}

In this appendix, we prove a Cacciopolli type inequality with boundary terms for bi-Laplace equations and the simplex Lemma \ref{simplex}. The following Cacciopolli inequality with boundary terms seems to be unknown in the literature for the bi-Laplace equations. The Cacciopolli inequality  (\ref{caccioppoli no bdry}) is used in 
Lemma \ref{propa-1}.

%By fixing $\delta$ small enough and $t$ large, the proof is arrived.
% Third, we choose 
%$R=R(\varepsilon,M,g,O) >0$ and $N_0=N_0(\varepsilon,M,g,O)$ %such that Lemma \ref{ln3} holds for these parameters and put $r:=R/(10Kt)$. This choice of $r$ provides \eqref{n3} for the pair of balls $B(x_0, \rho t(1+\delta))$ and $B(x_0, \rho(1+c_1))$ and  \eqref{n3*} for $B(x_i, \rho t)$. Hence
%the inequality \eqref{eq:sl1} holds and \eqref{eq:sl2} gives $$t^{(1-c_2)(\tilde N (1+\varepsilon) +C)} \geq t^{N(1-\varepsilon)}.$$  We therefore have 
\begin{lemma}\label{corollary Cac}
   Let  $u$ be the solutions of (\ref{bi-ci}). Then there exists $C$ such that
\begin{equation}\label{caccioppoli no bdry}
    \|\nabla \triangle u \|_{L^{2}({B}^+_{1/2})} +    \|\nabla ^2 u\|_{L^{2}({B}^+_{1/2})} + \|\nabla  u\|_{L^{2}({B}^+_{1/2})}  \leq C \big(M\|u\|_{L^{2}({B}^{+}_{2})} + \sum^3_{j=0}\|\frac{\partial^j u}{\partial \nu^j}\|_{H^{3-j}(\bf{B}_2)}\big).
\end{equation}
\end{lemma}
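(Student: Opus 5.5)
The plan is to extend $u$ across the flat boundary to a function on a full ball, and then apply the interior Caccioppoli inequality (\ref{hihcac}) with $m=2$, or the standard interior $H^4$ estimate, to the extension. Rather than doing a reflection, I would work directly with a cutoff and integrate by parts on the half ball, keeping track of every boundary integral on ${\bf B}_2$. The boundary integrals contain only derivatives of $u$ of order at most $3$. Each such derivative can be written as a combination of normal derivatives $\partial^j_{x_n}u$ and tangential derivatives of them, so it is controlled by $\sum_{j\le 3}\|\partial^j_{\nu}u\|_{H^{3-j}({\bf B}_2)}$.

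\textbf{Step 1.} Choose a cutoff $\eta\in C^\infty_0(B_2)$ with $\eta=1$ on $B_{1/2}$. Note that $\eta$ is not required to vanish on $\{x_n=0\}$. Test the equation $\triangle^2u=Wu$ against $\eta^4 u$ and integrate by parts twice:
\[
\int_{B^+_2}\eta^4 |\triangle u|^2 = \int_{B^+_2}\eta^4 W u^2 + (\text{commutator terms}) + (\text{boundary terms on } {\bf B}_2).
\]
The commutator terms contain $\nabla\eta$ multiplied by $u\nabla\triangle u$, $\nabla u\,\triangle u$, and similar products. They are absorbed by the usual weighted Young inequality together with interpolation, which reduces them to $\epsilon\int\eta^4|\triangle u|^2+C\int_{B_2^+}u^2$.

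\textbf{Step 2.} Identify the boundary terms. They are of the form $\int_{{\bf B}_2}\eta^4(u\,\partial_\nu\triangle u-\partial_\nu u\,\triangle u)$ plus lower order pieces. Each of $\triangle u$ and $\partial_\nu\triangle u$ is a sum of $\partial_{x_n}^ju$ with tangential derivatives of total order at most $3$. Cauchy--Schwarz then bounds every boundary term by $C(\sum_j\|\partial^j_\nu u\|_{H^{3-j}({\bf B}_2)})^2$. On the manifold the metric terms only add lower order contributions.

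\textbf{Step 3.} Recover the full norms. From the bound on $\|\eta^2\triangle u\|$, the estimate for $\|\nabla^2u\|$ and $\|\nabla u\|$ on $B^+_{1}$ follows from the $H^2$ estimate for the Laplacian on the half ball, whose Dirichlet data is $u|_{{\bf B}_2}$. That data is controlled in $H^{3/2}\subset H^3$ by the boundary sum. For $\nabla\triangle u$, set $w=\triangle u$. Then $\triangle w=Wu$, and the boundary data $w|_{{\bf B}_2}$ lies in $H^1$ because it is controlled by the boundary norms. The $H^1$ estimate for the Dirichlet problem on the half ball, with a slightly smaller cutoff, gives $\|\nabla w\|_{L^2(B^+_{1/2})}\le C(\|Wu\|+\|w\|_{L^2(B^+_1)}+\|w\|_{H^{1/2}({\bf B}_1)})$. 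Since $|W|\le M$, this produces the factor $M\|u\|$.

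The main obstacle I expect is the absorption in Step 1. Third-order terms such as $u\,\nabla\eta\cdot\nabla\triangle u$ appear, and they must be integrated by parts once more, which generates new boundary terms. These have to be checked to involve only data of the admissible orders. As a fallback I would use the direct $H^4$ boundary regularity estimate for the Dirichlet problem $\triangle^2u=f$, $u=g_0$, $\partial_\nu u=g_1$ on the half ball: $\|u\|_{H^4(B^+_{1/2})}\le C(\|f\|_{L^2(B_2^+)}+\|u\|_{L^2(B^+_2)}+\|g_0\|_{H^{7/2}}+\|g_1\|_{H^{5/2}})$. The traces in this bound would be replaced by the given boundary norms, where the trace of $u$ at order $3$ still needs the $H^{7/2}$ comparison handled by including more normal derivatives. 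The main route should avoid that issue by using the second-order estimates of Step 3 only.
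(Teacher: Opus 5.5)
Your proposal is correct, and its skeleton is the paper's. You test $\triangle^2u=Wu$ against $\eta^4u$ to control $\triangle u$, upgrade to the full Hessian, and then treat $w=\triangle u$ as a solution of $\triangle w=Wu$ to reach $\nabla\triangle u$. The two routes differ in how the upgrades are carried out:
\begin{itemize}
\item \emph{The paper} does everything by hand. Reilly's formula trades $\|\nabla^2(u\eta^2)\|$ for $\|\triangle(u\eta^2)\|$ plus boundary and curvature terms. Shen's identity expands $|\triangle(u\eta^2)|^2$ so that its main term is exactly $\triangle u\,\triangle(u\eta^4)$. Finally, $\nabla\triangle u$ comes from testing the equation with $\hat\eta^4\triangle u$, so the boundary term pairs the Neumann trace $\partial_\nu\triangle u$ with $\triangle u$.
\item \emph{You} quote local boundary regularity for the Dirichlet Laplacian: the $H^2$ estimate with data $u|_{{\bf B}}$, and the $H^1$ estimate with data $\triangle u|_{{\bf B}}$.
\end{itemize}
Your route is shorter and avoids the curvature bookkeeping, at the price of relying on nonhomogeneous Dirichlet estimates on a partial boundary. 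That reliance is harmless here. The traces lie in $H^3$ and $H^1$, so you can extend them constantly in $x_n$ (times a cutoff), subtract, and reduce to zero Dirichlet data. The paper's route is self-contained and keeps every constant explicit.

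A few details need fixing.
\begin{itemize}
\item \emph{Cutoff nesting.} With $\eta=1$ only on $B_{1/2}$ you control $\triangle u$ only on $B^+_{1/2}$, but Step 3 uses $\|\triangle u\|_{L^2(B^+_1)}$. Take $\eta=1$ on $B_1$, as the paper does, and run both second-order estimates from $B^+_1$ down to $B^+_{1/2}$.
\item \emph{Missing boundary term in Step 1.} The commutators do not reduce to $\epsilon\int\eta^4|\triangle u|^2+C\int u^2$ alone. Bounding $\int\eta^2|\nabla u|^2$ requires an integration by parts that produces $\int_{{\bf B}_2}\eta^2u\,\partial_\nu u$. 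That term is admissible, but it cannot be dropped: $u=e^{-kx_n}\cos(kx_1)$ is harmonic with $\int\eta^2|\nabla u|^2\sim k$ and $\int u^2\sim k^{-1}$.
\item \emph{Inclusion.} It is $H^3\subset H^{3/2}$, not the reverse.
\end{itemize}

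The obstacle you anticipate is not real. Integrating $\int u\,\nabla\eta^4\cdot\nabla\triangle u$ by parts yields $-\int\triangle u\,\mathrm{div}(u\nabla\eta^4)$ and the boundary term $\int_{{\bf B}_2}u\,\triangle u\,\partial_\nu\eta^4$. Both are of admissible order, so the $H^4$ fallback is unnecessary.
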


\begin{proof}
We choose a smooth cut-off function $\eta(x)$ such that $\eta = 1$ in $B_{1}^{+} $, $\eta = 0$ outside $B_{2}^{+}$ and $|\nabla^k \eta| \leq {C}$ for $0\leq k\leq 4. $
By Reilly's formula, we have
\begin{align}
 \int_{B_{2}^{+}} |\nabla^2 (u\eta^2)|^2&= \int_{B_{2}^{+}} |\triangle (u\eta^2)|^2 \nonumber -Ric( \nabla (u\eta^2),  \nabla (u\eta^2))\\ &-
\int_{ \bf{B}_2} 2\frac{\partial{(u\eta^2 )}}{\partial \nu}\triangle (u\eta^2)-H \frac{\partial{(u\eta^2 )}}{\partial \nu}
 +h(  \nabla (u\eta^2),  \nabla (u\eta^2)),
  \label{grad-1}
\end{align}
 where $H$ is the mean curvature,  $h$ is the second fundamental form of the boundary, and Ric is the Ricci curvature tensor,
% +\int_{ \bf{B}_2} \nabla_i( u\eta^2)\nabla_{ij}(u \eta^2)\nu_j ds- \int_{ \bf{B}_2}\triangle ( u\eta^2)\nabla ( u\eta^2)\cdot \nu ds,
%The application of integration by parts show that
%\begin{align}
% \int_{B_{2}^{+}} |\nabla^2 (u\eta^2)|^2= \int_{B_{2}^{+}} |\triangle (u\eta^2)|^2 +\int_{ \bf{B}_2} \nabla_i( u\eta^2)\nabla_{ij}(u \eta^2)\nu_j ds- \int_{ \bf{B}_2}\triangle ( u\eta^2)\nabla ( u\eta^2)\cdot \nu ds,
% \label{grad-1}
%\end{align}
and $\nu=\langle \nu_1, \nu_2, \cdots, \nu_n\rangle $ is the unit outer normal on the boundary. Note that the mean curvature, Ricci curvature and second fundamental form are bounded since we are considering the compact smooth manifold  $\mathcal{M}$.
To estimate the terms in right hand side of (\ref{grad-1}), we need to study $\int_{B_{2}^{+}} |\triangle (u\eta^2)|^2$.
The following identity holds 
\begin{align}
\triangle (u \eta^2)  \triangle (u \eta^2)&=\triangle u \triangle (u\eta^4) +u\triangle (u\eta^2)\triangle \eta^2+4|\nabla u \cdot \nabla \eta^2|^2 \nonumber \\
&+2u (\nabla u\cdot \nabla \eta^2)\triangle \eta^2-u\triangle u\eta^2 ( 8|\nabla \eta|^2+\triangle \eta^2).
\label{shen-i}
\end{align}
See e.g. \cite{Shen06}. Now we study every term in the left hand side of (\ref{shen-i}) using the integration by parts arguments.
For the first term, performing integration by parts gives that
\begin{align}
\int_{B_{2}^{+}} \triangle  u \triangle (u\eta^4)-\int_{B_{2}^{+}} \triangle^2  u  (u\eta^4)=\int_{\bf{B}_2}\triangle u\frac{\partial (u\eta^4)}{\partial \nu} ds -\int_{\bf{B}_2}\frac{\partial \triangle u}{\partial \nu} (u\eta^4) ds.
\end{align}
Thus,  the equation (\ref{bi-ci}) yields that
\begin{align}
\int_{B_{2}^{+}} \triangle  u \triangle (u\eta^4)=\int_{B_{2}^{+}} W(x) u^2  \eta^4+
\int_{\bf{B}_2}\triangle u\frac{\partial (u\eta^4)}{\partial \nu} ds -\int_{\bf{B}_2}\frac{\partial \triangle u}{\partial \nu} (u\eta^4) ds.    
\end{align}
It follows from Cauchy-Schwartz inequality that
\begin{align}
   |\int_{B_{2}^{+}} \triangle  u \triangle (u\eta^4) |\leq CM \int_{B_{2}^{+}} u^2 +
   +C\sum^3_{j=0}\|\frac{\partial^j u}{\partial \nu^j}\|^2_{H^{3-j}(\bf{B}_2)}.
   \label{BB-2}
\end{align}
For the second term, by Young's inequality, for any small $\varepsilon>0$, we have
\begin{align}
 |\int_{B_{2}^{+}} u\triangle (u\eta^2) \triangle \eta^2|&\leq 
 \varepsilon\int_{B_{2}^{+}} |\triangle (u\eta^2) |^2+ \frac{C} {\varepsilon} \int_{B_{2}^{+}} (u \triangle \eta^2)^2 \nonumber \\
 &\leq 
 \varepsilon\int_{B_{2}^{+}} |\triangle (u\eta^2) |^2+ \frac{C} {\varepsilon} \int_{B_{2}^{+}} u^2.
 \label{com-3}
\end{align}
Thus, we can incorporate the first term in the right hand side (\ref{com-3}) into the left hand side when taking integration on the identity (\ref{shen-i}).
For the fourth term in the right hand side of (\ref{shen-i}), integrating by parts shows that
\begin{align}
2\int_{B_{2}^{+}} u(\nabla u\cdot \nabla \eta^2)\triangle \eta^2&=   \int_{B_{2}^{+}} (\nabla u^2\cdot \nabla \eta^2)\triangle \eta^2 \nonumber\\
&=-\int_{B_{2}^{+}} u^2(\triangle \eta^2)^2- \int_{B_{2}^{+}} u^2 \nabla \eta^2 \cdot \nabla (\triangle \eta^2)-\int_{\bf{B}_2} u^2 \triangle \eta^2 \nabla \eta^2\cdot \nu.
\end{align}
Then 
\begin{align}
2|\int_{B_{2}^{+}} u(\nabla u\cdot \nabla \eta^2)\triangle \eta^2|  \leq {C} \int_{B_{2}^{+}} u^2+ \|u\|^2_{L^2(\bf{B}_2)}.
\label{com-2}
\end{align}

For the last term in the right hand side of (\ref{shen-i}),
let $\phi=8|\nabla \eta|^2+\triangle \eta^2.$ Then
\begin{align}
\int_{B_{2}^{+}} \eta^2 u\triangle u\phi&=-\int_{B_{2}^{+}} |\nabla u|^2\eta^2 \phi-\frac{1}{2}\int_{B_{2}^{+}}\nabla (\eta^2\phi)\cdot\nabla u^2+\int_{\bf{B}_2}\eta^2u\phi\nabla u\cdot \nu ds\nonumber \\
&=-\int_{B_{2}^{+}} |\nabla u|^2\eta^2 \phi+ \frac{1}{2}\int_{B_{2}^{+}}\triangle (\eta^2\phi) u^2 -\int_{\bf{B}_2} \nabla (\eta^2\phi)\cdot \nu u^2 ds+\int_{\bf{B}_2}\eta^2u\phi\nabla u\cdot \nu ds.
\end{align}
Hence we obtain
\begin{align}
| \int_{B_{2}^{+}} \eta^2 u\triangle u\phi| &\leq |\int_{B_{2}^{+}} |\nabla u|^2\eta^2 \phi|+    \int_{B_{2}^{+}} u^2+ \sum^3_{j=0}\|\frac{\partial^j u}{\partial \nu^j}\|^2_{H^{3-j}(\bf{B}_2)}.
\label{prepare-1}
\end{align}

Now we study the third term in the right hand side of (\ref{shen-i}). It holds that $|\nabla u\cdot \nabla \eta^2|^2= 4 \eta^2 u_i u_j \eta_i \eta_j $, where $u_i$ is the covariant derivative of $u$ with respect to $\frac{\partial}{\partial x_i}$ and  the summation is understood by Einstein notation.
%4\eta^2 \frac{\partial u}{\partial x_i}\frac{\partial u}{\partial x_j} \frac{\partial \eta}{\partial x_i} \frac{\partial \eta}{\partial x_j}.$
Let $ \eta_i \eta_j =\psi$.
%\frac{\partial \eta}{\partial x_i} \frac{\partial \eta}{\partial x_j}=\psi$.
By the fact that
\begin{align}
u_i u_j \eta^2 \psi= ( (u\eta^2)_j  u\psi)_i-
(u\eta^2)_{ij} u \psi-u_j \psi_i \eta^2 u- u_i (\eta^2)_j u\psi- (\eta^2)_j \psi_i u^2,
\end{align}
%\frac{\partial u}{\partial x_i} \frac{\partial u}{\partial x_j}\eta^2 \psi= \frac{\partial}{x_i}\big( \frac{\partial (u\eta^2)}{\partial x_j} u\psi \big)- \frac{\partial^2(u\eta^2)}{ \partial x_i \partial x_j}
%    u\psi-\frac{\partial u}{\partial x_j} \frac{\partial \psi}{\partial x_i}\eta^2 u - \frac{\partial u}{\partial x_i} \frac{\partial \eta^2 }{\partial x_j} u\psi -\frac{\partial \eta^2 }{\partial x_j}\frac{\partial \psi }{\partial x_i}u^2 ,
we get
\begin{align}
\int_{B_{2}^{+}} u_i u_j\eta^2 \psi&= \int_{\bf B_{2}} (u\eta^2)_j u \psi \nu_i ds -\int_{B_{2}^{+}} (u\eta^2)_{ij} u\psi +\frac{1}{2} \int_{B_{2}^{+}} u^2 (\eta^2  \psi_i)_j \nonumber \\ &-\frac{1}{2} \int_{\bf B_{2}} u^2\eta^2  \psi_i \nu_j ds+\frac{1}{2}\int_{B_{2}^{+} } u^2 (  (\eta^2)_j\psi)_i- \frac{1}{2} \int_{\bf B_{2}} u^2\psi  (\eta^2)_j \nu_i ds
\nonumber \\ &- \int_{B_{2}^{+} }u^2  (\eta^2)_j \psi_i. 
\end{align}
%\int_{B_{2}^{+}} \frac{\partial u}{\partial x_i} \frac{\partial u}{\partial x_j}\eta^2 \psi&= \int_{\bf B_{2}} \frac{\partial (u\eta^2) }{\partial x_j} u \psi \nu_i ds -\int_{B_{2}^{+}} \frac{\partial^2(u\eta^2)}{ \partial x_i \partial x_j} u\psi +\frac{1}{2} \int_{B_{2}^{+}} u^2 \frac{\partial}{\partial x_j}(\eta^2 \frac{\partial \psi}{\partial x_i}) \nonumber \\ &-\frac{1}{2} \int_{\bf B_{2}} u^2\eta^2 \frac{\partial \psi}{\partial x_i}\nu_j ds+\frac{1}{2}\int_{B_{2}^{+} } u^2 \frac{\partial}{\partial x_i}( \frac{\partial \eta^2}{\partial x_j}\psi)- \frac{1}{2} \int_{\bf B_{2}} u^2\psi \frac{\partial \eta^2}{\partial x_j} \nu_i ds
%\nonumber \\ &- \int_{B_{2}^{+} }u^2 \frac{\partial \eta^2 }{\partial x_j}\frac{\partial \psi }{\partial x_i}.  
%\end{align}
By  Young's inequality, for any small $\varepsilon>0$, we have
\begin{align}
  \int_{B_{2}^{+}} (u\eta^2)_{ij} u\psi \leq \varepsilon   \int_{B_{2}^{+}} |\nabla^2 (u\eta^2)|^2 +\frac{C}{\varepsilon} \int_{B_{2}^{+}} |u|^2 |\nabla \eta|^4.
\end{align}
It follows from the last two inequalities that
\begin{align}
 \int_{B_{2}^{+}} u_i u_j \eta^2 \psi \leq    \varepsilon   \int_{B_{2}^{+}} |\nabla^2 (u\eta^2)|^2+  \frac{C}{\varepsilon} \int_{B_{2}^{+}} |u|^2 +  \sum^3_{j=0}\|\frac{\partial^j u}{\partial \nu^j}\|^2_{H^{3-j}(\bf{B}_2)}.
 \label{integrate-1}
\end{align}
That is,
\begin{align}
 \int_{B_{2}^{+}} |\nabla u\cdot \nabla \eta^2|^2 \leq    \varepsilon   \int_{B_{2}^{+}} |\nabla^2 (u\eta^2)|^2+  \frac{C}{\varepsilon} \int_{B_{2}^{+}} |u|^2 +  \sum^3_{j=0}\|\frac{\partial^j u}{\partial \nu^j}\|^2_{H^{3-j}(\bf{B}_2)}.
    \label{com-1} 
\end{align}
Recall that $\phi=8|\nabla \eta|^2+\triangle \eta^2.$ By the similar argument as (\ref{integrate-1}), we can also show that
\begin{align}
    \int_{B_{2}^{+}} |\nabla u|^2\eta^2 \phi \leq    \varepsilon   \int_{B_{2}^{+}} |\nabla^2 (u\eta^2)|^2+  \frac{C}{\varepsilon} \int_{B_{2}^{+}} |u|^2 +  \sum^3_{j=0}\|\frac{\partial^j u}{\partial \nu^j}\|^2_{H^{3-j}(\bf{B}_2)}.
    \label{com-1-2}
\end{align}

Therefore, from (\ref{prepare-1}), we derive that
\begin{align}
| \int_{B_{2}^{+}} \eta^2 u\triangle u\phi|    \leq    \varepsilon   \int_{B_{2}^{+}} |\nabla^2 (u\eta^2)|^2+  \frac{C}{\varepsilon} \int_{B_{2}^{+}} |u|^2 +  \sum^3_{j=0}\|\frac{\partial^j u}{\partial \nu^j}\|^2_{H^{3-j}(\bf{B}_2)}.
\label{com-0}
\end{align}
 The following interpolation inequality holds, see e.g. \cite{Au98},
\begin{align}
    \int_{B_2^+} |\nabla (u\eta^2)|^2\leq 
{\varepsilon}\int_{B_2^+} |\nabla^2 (u\eta^2) |^2+ 
  \frac{C}{\varepsilon}  \int_{B_2^+} |u\eta^2|^2
\label{inte-1}
\end{align}
for any small $\varepsilon>0$.
By choosing $\varepsilon>0$ small enough, and combining terms in (\ref{grad-1}), (\ref{shen-i}), (\ref{BB-2}), (\ref{com-3}),  (\ref{com-2}), (\ref{com-1}), (\ref{com-0}), and (\ref{inte-1}),  we obtain that
\begin{align}
  \int_{B_2^+} |\nabla^2 (u\eta^2) |^2\leq   {C}M \int_{B_{2}^{+}} |u|^2 +  \sum^3_{j=0}\|\frac{\partial^j u}{\partial \nu^j}\|^2_{H^{3-j}(\bf{B}_2)}.
\end{align}

%We multiply (\ref{bi-ci}) by $\eta^4 u$, where $\eta = 1$ in $B_{1}^{+} $, $\eta = 0$ outside $B_{2}^{+}$ and $|\nabla \eta| \leq {C}$. 
 % Since $u$ is a solution of \eqref{bi-ci}, we can multiply \eqref{bi-ci} by $\eta^4 u$ and integrate over $B_2^{+}$ to get
%\begin{align}
%\int_{B_2^+}\triangle u\triangle (\eta^4 u)+\int_{\bf{B}_2}\frac{\partial\triangle u}{\partial \nu}\eta^4 u-\int_{\bf{B}_2}{\triangle u} \frac{\partial (\eta^4 u)}{\partial \nu}=\int_{B_2^+} W(x)\eta^4 u^2.
%\end{align}

%Integrating by parts show that
%\begin{align}

%By Young's inequality, we get
%\begin{align}
%\int_{B_2^+}  \eta^4 (\triangle u)^2\leq \varepsilon \int_{B_2^+}  \eta^4 (\triangle u)^2+ \frac{C}{\varepsilon}
%\int_{B_2^+} |\nabla u|^2 +\frac{C}{\varepsilon}
%\int_{B_2^+} u^2+\sum^3_{j=0}\|\partial^j_{x_n}u\|^2_{H^{3-j}(\bf{B}_2)}.
%\end{align}
%By choosing $\varepsilon$ small,
%\begin{align}
%\int_{B_2^+}  \eta^4 (\triangle u)^2\leq C 
%\int_{B_2^+} |\nabla u|^2 +{C}M^2
%\int_{B_2^+} u^2+\sum^3_{j=0}\|\partial^j_{x_n}u\|^2_{H^{3-j}(\bf{B}_2)}.
%\end{align}
%Since $\eta=0$ on $\partial B_2^+\backslash \{x_n=0\}$, the following Poincare inequality holds
%\begin{align}
%    \int_{B_2^+} |\nabla (u\eta^2)|^2\leq C  \int_{B_2^+} |\nabla |\nabla (u\eta^2)||^2\leq C 
%\int_{B_2^+} |\nabla^2 (u\eta^2) |^2.
%\end{align}
Thanks to (\ref{inte-1}),  we further show that
\begin{align}
 \int_{B_2^+} |\nabla (u\eta^2) |^2+  \int_{B_2^+} |\nabla^2 (u\eta^2) |^2\leq   {C}M \int_{B_{2}^{+}} |u|^2 +  \sum^3_{j=0}\|\frac{\partial^j u}{\partial \nu^j}\|^2_{H^{3-j}(\bf{B}_2)}.
 \label{com-a}
\end{align}
By the definition of $\eta$, it is easy to see that
\begin{align}
\int_{B_1^+} |u |^2 +  \int_{B_1^+} |\nabla^2 u |^2 \leq {C}M \int_{B_{2}^{+}} |u|^2 +  \sum^3_{j=0}\|\frac{\partial^j u}{\partial \nu^j}\|^2_{H^{3-j}(\bf{B}_2)}.  
\label{zhu-zhu}
\end{align}

At last, we want to provide a upper bound for $\int_{B_{1/2}^+} |\nabla \triangle u |^2$. We choose a smooth cut-off function $\hat{\eta}$ such that
$\hat{\eta}=1$ in ${B_{1/2}^+}$, $\hat{\eta}=0$ outside of ${B_{1}^+}$, and $|\nabla^k \hat{\eta}| \leq {C}$ for $0\leq k\leq 4. $
Multiplying (\ref{bi-ci}) by $\triangle u \hat{\eta}^4$, and then integrating by parts give that
\begin{align}
 \int_{B_1^+} |\nabla \triangle u \hat{\eta}^2 |^2
+ 4\int_{B_1^+} \nabla \triangle u \cdot\nabla \hat{\eta} \hat{\eta}^3 \triangle u-\int_{\bf B_1} \nabla \triangle u \cdot \nu \hat{\eta}^4 \triangle u =- \int_{B_1^+} W(x) u\triangle u \hat{\eta}^4.
\label{tri-2}
\end{align}
By Young's inequality, for any small $\varepsilon>0$, we have
\begin{align}
  4\int_{B_1^+} \nabla \triangle u \cdot\nabla \hat{\eta}\hat{\eta}^3 \triangle u\leq   \varepsilon\int_{B_1^+} |\nabla \triangle u \hat{\eta}^2 |^2 +\frac{C}{\varepsilon} \int_{B_1^+} |\nabla \hat{\eta}|^2 |\hat{\eta}|^2 |\triangle u|^2.
  \label{tri-3}
\end{align}
It follows from (\ref{zhu-zhu})--(\ref{tri-3}), and Cauchy-Schwartz inequality that
\begin{align}
\int_{B_1^+} |\nabla \triangle u \hat{\eta}^2 |^2&\leq  {C}M^2 \int_{B_{1}^{+}} |u|^2 +  \sum^3_{j=0}\|\frac{\partial^j u}{\partial \nu^j}\|^2_{H^{3-j}(\bf{B}_1)}+ C \int_{B_1^+} |\triangle u|^2 \nonumber \\   &\leq {{C}M^2} \int_{B_{2}^{+}} |u|^2 +  \sum^3_{j=0}\|\frac{\partial^j u}{\partial \nu^j}\|^2_{H^{3-j}(\bf{B}_2)},
\end{align}
where we have chosen $\varepsilon$ small enough in (\ref{tri-3}).
Thus, it holds that
\begin{align}
\int_{B_{1/2}^+} |\nabla \triangle u|^2&\leq  {C}M^2 \int_{B_{2}^{+}} |u|^2 +  \sum^3_{j=0}\|\frac{\partial^j u}{\partial \nu^j}\|^2_{H^{3-j}(\bf{B}_2)}.
\end{align}
Hence we arrive at the desired results in the lemma from the last inequality and (\ref{zhu-zhu}).
\end{proof}

At last, we prove the simplex lemma  using the almost monotonicity of doubling index in Lemma \ref{baa}.
The following Euclidean geometry result holds:
 {\it there exist $\hat{\tau}>0$, $K \geq 2/\gamma$ depending on $\gamma, n$ only  such that if $\rho = K diam(S)$, then
 $B_{\rho(1+\hat{\tau})}(x_0) \subset \cup_{i=1}^{n+1}B_{\rho}(x_i)$.} 

 We remark that if the simplex is very degenerate ($\gamma$ is small), then $\hat{\tau}$ has to be  small and the number $K$ has to be big: $$ \hat{\tau} \to 0, K \to +\infty  \textup{ as } \gamma \to 0.$$

\begin{proof}[Proof of Lemma \ref{simplex}]
 
Due to the almost monotonicity of the doubling index,  we may assume that $B_i$ have the same radius $\rho =K diam(S)$.
Let $\hat{M}= \|u\|_{L^\infty(\cup^{n+1}_{i=1} B_\rho(x_i))}$. Thus, there exists some $i$ such that $\hat{M}= \|u\|_{L^\infty( B_\rho(x_i))}$. By the Euclidean geometric result, we have $\|u\|_{L^\infty(B_{\rho(1+\hat{\tau})}(x_0))}\leq \hat{M}$. The monotonicity of doubling index in (\ref{mono-del-1}) implies that $ \|u\|_{L^\infty( B_{t\rho}(x_i))}\geq \hat{M} t^{N(1-\delta)}$ for $N\geq N_0(g, \delta)$ and $t>2$.

The following geometric fact also holds.
{\it there exists ${c_1}/{t} \in (0,1)$ such that $B_{\rho t}(x_i) \subset B_{\rho t(1+c_1/t)}(x_0) $ as $t \to +\infty$. \it}
This result can be shown by the triangle inequality.  For $
x\in B_{\rho t} (x_i)$, we can show that
\begin{align*}
|x-x_0|\leq |x-x_i|+|x_i-x_0|\leq {\rho t}+diam(S)={\rho t}(1+c_1/t),
\end{align*}where $c_1=1/K$.

Let $\hat{N}$ be the doubling index for $B_{\rho t(1+c_1/t))}(x_0)$. By the almost monotonicity of doubling index (\ref{mono-del}), we have
$$
\left[\frac{t(1+c_1/t)}{1+\hat{\tau}}\right]^{\hat{ N} (1+\delta) } \geq \frac{\|u\|_{L^\infty(B_{\rho t(1+c_1/t)}(x_0)} }{\|u\|_{L^\infty(B_{\rho(1+\hat{\tau})}(x_0)}} \geq  \frac{\|u\|_{L^\infty(B_{\rho t}(x_i))}}{\|u\|_{L^\infty(B_{\rho }(x_i))}} \geq  t^{N(1-\delta)}.
$$
Thus, it implies that
\begin{equation} \label{eq:sl1}
\left[\frac{t(1+c_1/t)}{1+\hat{\tau}}\right]^{\hat{ N} (1+\delta) } \geq t^{N(1-\delta)}.
\end{equation}

  Now, we specify our choice of parameters. For $t>2$ large, there exists $c_2\le \ln\frac{1+\hat{\tau}}{1+\hat{\tau}/2}/\ln t$ such that
\begin{equation} \label{eq:sl2}
\frac{t(1+c_1/t)}{1+\hat{\tau}} \leq t^{1-c_2}.
\end{equation}
 Next we choose $\delta=o(c_2)$  (i.e. say $\delta=t^{-\beta}$ for some large $\beta$) such that
\begin{equation} 
 \frac{1-\delta}{(1+\delta)(1-c_2)}> 1+c_3
\end{equation}
for some $c_3(c_2)>0$.
Thanks to (\ref{eq:sl1}) and (\ref{eq:sl2}), we can estimate $\hat{N}$ from below
\begin{align}
\hat{ N} \geq N \frac{(1-\delta)}{(1+\delta)(1-c_2)} \geq N(1+c_3).
%- \frac{C\ln t/\delta}{(1+\delta)(1-c_2)} \geq N(1+2c_3) - \frac{C\ln t/\delta}{(1+\delta)(1-c_2)} \nonumber \\ 
%+ (c_3 N_0 - \frac{C\ln t/\delta}{(1+\delta)(1-c_2)}).
\end{align}
%We can also choose $N_0$ to be big enough so that $c_3 N_0 - \frac{C\ln t/\delta}{(1+\delta)(1-c_2)}>0$. Thus, $\hat{ N} > N(1+c_3)$.
By fixing $t$ large, then $c_2$, $\delta$ and $c_3$ depend on $\hat{\tau}$. On the other hand, $\hat{\tau}$ depends on $\gamma$ and $n$.
To apply the almost monotonicity of doubling index in Lemma \ref{baa}, we choose $r\leq \frac{\bar R(\delta, g)}{100Kt}$, where $\bar R(\delta, g)$ is in Lemma \ref{baa}. Thus, $C$ and $c$ in (\ref{simplex-mon}) depend on $\gamma$ and $n$, while $N_0$ and $r$ depend on $\gamma$ and $g$.
\end{proof}
\bibliography{mybib}
\bibliographystyle{abbrv}
\end{document}